\def \[{\begin{equation}}
\def \]{\end{equation}}
\newtheorem{thm}{Theorem}[section]
\newtheorem{prop}[thm]{Proposition}
\newtheorem{lem}[thm]{Lemma}
\newtheorem{cor}[thm]{Corollary}
\newtheorem{conj}[thm]{Conjecture}
\begin{document}

\setlength{\baselineskip}{20pt}
\begin{center}{\Large \bf Minimally \emph{k}-factor-critical graphs for some large \emph{k}}\footnote{This work
is supported by NSFC\,(Grant No. 11871256).}

\vspace{4mm}

{Jing Guo, Heping Zhang \footnote{The corresponding author.}
\renewcommand\thefootnote{}\footnote{E-mail addresses: guoj20@lzu.edu.cn (J. Guo), zhanghp@lzu.edu.cn (H. Zhang).}}

\vspace{2mm}

\footnotesize{ School of Mathematics and Statistics, Lanzhou University, Lanzhou, Gansu 730000, P. R. China}

\end{center}
\noindent {\bf Abstract}: A graph $G$ of order $n$ is said to be $k$-factor-critical
 for integers $1\leq k < n$, if the removal of any $k$
 vertices results in a graph with a perfect matching.
 $1$- and $2$-factor-critical graphs are the well-known
 factor-critical and bicritical graphs, respectively. A $k$-factor-critical graph $G$
 is called minimal if for any edge $e\in E(G)$, $G-e$ is not $k$-factor-critical.
 In 1998, O. Favaron and M. Shi conjectured that every minimally $k$-factor-critical graph
 of order $n$ has the minimum degree $k+1$ and confirmed it for $k=1, n-2, n-4$ and $n-6$.
 In this paper, we use a simple method to reprove the above result.
 As a main result, the further use of this method enables ones to
 prove the conjecture to be true for $k=n-8$.
 We also obtain that every minimally $(n-6)$-factor-critical graph of order $n$ has
 at most $n-\Delta(G)$ vertices with the maximum degree $\Delta(G)$ for $n-4\leq \Delta(G)\leq n-1$.

\vspace{2mm} \noindent{\bf Keywords}: Perfect matching;
Minimally $k$-factor-critical graph; Minimum degree.
\vspace{2mm}

\noindent{AMS subject classification:} 05C70,\ 05C75

 {\setcounter{section}{0}
\section{Introduction}\setcounter{equation}{0}

All graphs considered in this paper are finite,  undirected and simple.
Let $G$ be a graph with vertex set $V(G)$ and edge set  $E(G)$.
The {\em order} of  $G$ is the cardinality of $V(G)$.
For a vertex $x$ of G, let $d_G(x)$ be the degree of $x$ in $G$, i.e. the number of edges of $G$ incident with $x$, and let $\delta(G)$
and  $\Delta(G)$ denote the minimum degree and maximum degree of $G$ respectively.

A {\em matching} of $G$ is an edge subset of $G$ in which no two edges have a common end-vertex.
A matching $M$ of $G$ is said to be a {\em perfect matching} or a {\em 1-factor} if it covers all vertices of $G$.
%We adopt the convention that a graph of order $0$ has a perfect matching.
%For details concerning the matching theory,
%the reader is refered to \cite{29} by L. Lov\'{a}sz and M. D. Plummer.
A graph $G$ is called {\em factor-critical} if the removal of each vertex
of $G$ results in a graph with a perfect matching.  A graph with an edge is called {\em bicritical} if the removal
of each pair of distinct vertices of $G$ results in a graph with a perfect matching. A 3-connected bicritical graph is the so-called  {\em brick}.
Factor-critical and bricks were introduced by
T. Gallai \cite{GT} and L. Lov\'{a}sz \cite{LL}, respectively, which play important roles in Gallai-Edmonds Structure Theorem and in determining the dimensions of perfect matching polytopes and matching lattices; see a detailed monograph due to L. Lov\'{a}sz and M. D. Plummer \cite{LP}.

Generally, O. Favaron \cite{F} and Q. Yu \cite{Y} independently defined $k$-factor-critical graphs for any positive integer $k$.
A graph $G$ of order $n$ is said to be $k$-{\em factor-critical}
for positive integer $k<n$, if the removal of any $k$
vertices of $G$ results in a graph with a perfect matching. They   characterized  $k$-factor-critical graphs in Tutte's type and showed that such graphs are $(k+1)$-edge-connected.  To date there have been  many studies on $k$-factor-critical graphs;
see articles \cite{PS, L, PMD, N, LD, LDYQ, FFR, ZWZ} and a monograph \cite{YL}.
%in terms of degree sum,
%neighborhood union, closure operation, planarity, toughness and claw-free,D. Lou  \cite{21} characterized the $2k$-factor-critical graphs of even order.

A graph $G$ is called {\em minimally $k$-factor-critical} if $G$ is $k$-factor-critical
but $G-e$ is not $k$-factor-critical for any $e\in E(G)$.
%S. Zhai et al. \cite{38} presented the characterizations of $k$-factor-critical graphs and
%minimally $k$-factor-critical graphs for $k\geq 2$.
O. Favaron and M. Shi \cite{FS} studied some properties of minimally
$k$-factor-critical graphs and obtained an upper bound of minimum degree of
minimally $k$-factor-critical graphs as follows.

\begin{thm}[\cite{FS}] \label{degree} For a minimally $k$-factor-critical graph $G$ of
order $n\geq k+4$, $\delta(G)\leq \frac{n+k}{2}-1$.
If moreover $n\geq k+6$, then $\delta(G)\leq \frac{n+k}{2}-2$.
\end{thm}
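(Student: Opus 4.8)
The plan is to bound $\delta(G)$ from above by exploiting minimality of $G$ together with the Tutte-type characterization of $k$-factor-critical graphs mentioned in the introduction. The key observation is that minimality gives, for every edge $e = xy \in E(G)$, a "witness" set $S_e$ of $k$ vertices such that $G - e - S_e$ has no perfect matching; by the Tutte-type condition for $k$-factor-criticality (Favaron \cite{F}, Yu \cite{Y}), $G$ itself satisfies the corresponding parity/oddness condition, so $G - S_e$ has a perfect matching $M_e$, and $e \in M_e$ — otherwise $M_e$ would survive in $G - e - S_e$. Thus every edge of $G$ lies in a perfect matching of some $(n-k)$-vertex induced subgraph whose deletion set $S_e$ is "tight" in the Tutte sense.

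**The core counting.**

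Let $v$ be a vertex of minimum degree $\delta = \delta(G)$, and suppose for contradiction that $\delta \geq \frac{n+k}{2}$ (respectively $\delta \geq \frac{n+k}{2}-1$ under the stronger hypothesis $n \geq k+6$). First I would fix an edge $e = vw$ incident with $v$, take its witness set $S_e$ with $|S_e| = k$, and analyze the tight Tutte set $S$ that makes $G - e - S_e$ fail to have a perfect matching. Since $G$ is $k$-factor-critical, $|V(G)| - k = n - k$ is even, and one shows $v \notin S_e$ and $w \notin S_e$ (if $v$ or $w$ were in $S_e$, deleting $e$ changes nothing and contradicts $k$-factor-criticality of $G$). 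Now in $G - S_e$ the edge $e$ is an isolated-choice edge: the component structure of $G - e - S_e$ must have an odd component or a parity defect. Counting: $v$ has at most $k$ of its $\delta$ neighbours inside $S_e$, so $v$ has at least $\delta - k \geq \frac{n-k}{2}$ neighbours in $G - S_e$, which has exactly $n-k$ vertices. The same holds for $w$. Then $v$ and $w$ together dominate nearly all of $G - S_e$, forcing $G - e - S_e$ to be "almost complete," which is enough connectivity to produce a perfect matching avoiding $e$ — the contradiction.

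**Sharpening to get the $-2$ bound.**

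For the improvement to $\delta(G) \leq \frac{n+k}{2} - 2$ when $n \geq k+6$, the same skeleton applies but the slack is one larger, so I would instead choose two edges, say $e_1 = vw_1$ and $e_2 = vw_2$ at $v$ (possible since $\delta \geq k+2 \geq 3$ in this range), get witness sets $S_{e_1}, S_{e_2}$, and play the two tight Tutte conditions against each other. The point is that a single edge at $v$ being "critical" is cheap, but having many critical edges all concentrated at a high-degree vertex over-constrains the Tutte sets; comparing $S_{e_1}$ and $S_{e_2}$ (which may differ) and tracking how $v$'s large neighbourhood must avoid both, one gains the extra $-1$. Alternatively, one can run the single-edge argument and then observe that the near-complete structure of $G - e - S_e$ plus $n \geq k+6$ leaves room to reroute an alternating path, upgrading a perfect matching of $G - S_e$ to one of $G - e - S_e$ directly.

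**Main obstacle.**

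The hard part will be the bookkeeping of the Tutte/Gallai–Edmonds structure of $G - e - S_e$: ruling out the degenerate configurations where the odd component is a single vertex adjacent only inside $S_e$, and making the domination estimate tight enough. The inequality $\delta - k \geq \frac{n-k}{2}$ is the crux — it says $v$'s neighbourhood misses at most $\frac{n-k}{2}$ vertices of $G-S_e$, and one must show this (for $v$ and $w$ simultaneously, using that they are adjacent) is incompatible with the existence of a Tutte-violating set of size $0$ in $G - e - S_e$. Getting the arithmetic to close exactly at $\frac{n+k}{2}-1$ and then at $\frac{n+k}{2}-2$, rather than losing a constant, is where the care is needed; everything else is a routine alternating-path or augmenting argument.
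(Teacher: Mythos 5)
The paper itself only cites this theorem from Favaron and Shi without proving it, so your proposal has to stand on its own, and as written its crux step fails. Your framework (the minimality witness $S_e$ with $|S_e|=k$, Tutte's theorem applied to $G'=G-e-S_e$, and degree counting) is the right machinery, but the contradiction you claim cannot be extracted from the two endpoints alone. Suppose $\delta(G)\geq\frac{n+k}{2}$, let $e=vw$, and let $X$ satisfy $C_o(G'-X)=|X|+2$ with $v,w$ in distinct odd components $C_v,C_w$. Since all neighbours of $v$ lie in $(C_v\setminus\{v\})\cup X\cup S_e\cup\{w\}$, your inequality $\delta-k\geq\frac{n-k}{2}$ yields only $|C_v|\geq\frac{n-k}{2}-|X|$, and similarly for $C_w$; combined with $|C_v|+|C_w|\leq n-k-2|X|$ this closes with equality and gives no contradiction. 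Indeed, the extremal configuration in which $G-S_e$ consists of two cliques of order $\frac{n-k}{2}$ joined only by the edge $e$ (and its variants with $|X|\geq 1$) satisfies your domination condition on $v$ and $w$ and genuinely has no perfect matching avoiding $e$, so the inference ``$v$ and $w$ dominate nearly all of $G-S_e$, hence $G-e-S_e$ is almost complete and has a matching avoiding $e$'' is false. The missing idea is to bound the degree of a \emph{third} vertex: any $x\notin\{v,w\}$ lying in an odd component $C$ of $G'-X$ has all its neighbours in $(C\setminus\{x\})\cup X\cup S_e$, so $d_G(x)\leq |C|-1+|X|+k\leq\frac{n+k}{2}-1<\delta$, while a vertex forming a trivial odd component has $d_G(x)\leq |X|+k\leq\frac{n+k}{2}-1$; the hypothesis $n\geq k+4$ is exactly what guarantees such a vertex exists (if $|X|=0$ then $|C_v|\geq\frac{n-k}{2}\geq 2$). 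Without this step the first bound is not proved.

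The second assertion, $\delta(G)\leq\frac{n+k}{2}-2$ for $n\geq k+6$, is the harder half and your text offers only a plan (``play the two tight Tutte conditions against each other'', ``reroute an alternating path''), not an argument. Note that under $\delta=\frac{n+k}{2}-1$ the single-edge counting above can be met with equality everywhere (for instance, two cliques of order $\frac{n-k}{2}$, each vertex joined to all of $S_e$, the cliques joined only by $e$), so no contradiction is obtainable from one witness set alone; one must exploit the rigidity of this near-extremal structure by invoking minimality for further edges (for example edges inside the odd components, which exist precisely because $n\geq k+6$ makes these components nontrivial) and must also dispose of the cases $|X|\geq 1$. None of that bookkeeping is carried out, so the $-2$ improvement is essentially missing rather than merely sketched.
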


From Theorem \ref{degree}, the following result is immediate.

\begin{cor}[\cite{FS}]\label{Special}
Let $G$ be a minimally $k$-factor-critical graph of order $n$.
If $k=n-2, n-4$ or $n-6$, then $\delta(G)=k+1$.
\end{cor}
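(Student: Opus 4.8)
The plan is to pin down $\delta(G)$ by establishing $k+1\le \delta(G)\le k+1$. For the lower bound, note that $G$, being $k$-factor-critical (the extra minimality hypothesis is irrelevant here), is $(k+1)$-edge-connected by the result of Favaron and Yu quoted above; since the edge-connectivity of a graph never exceeds its minimum degree, $\delta(G)\ge k+1$. This half holds for every $k$-factor-critical graph, minimal or not, so all the work lies in the matching upper bound, which is exactly where Theorem~\ref{degree} enters.

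For the upper bound I would simply substitute the three prescribed values of $k$ into Theorem~\ref{degree}. When $k=n-4$ we have $n=k+4\ge k+4$, so the first inequality of Theorem~\ref{degree} applies and gives $\delta(G)\le \frac{n+k}{2}-1=\frac{2k+4}{2}-1=k+1$. When $k=n-6$ we have $n=k+6\ge k+6$, so the sharper second inequality applies and gives $\delta(G)\le \frac{n+k}{2}-2=\frac{2k+6}{2}-2=k+1$. In both cases, combining with the lower bound yields $\delta(G)=k+1$.

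The value $k=n-2$ is not covered by Theorem~\ref{degree}, because then $n=k+2<k+4$, so I would dispose of it by an elementary direct argument: if $G$ is $(n-2)$-factor-critical of order $n$, then deleting any $n-2$ of its vertices must leave a graph on the remaining $2$ vertices with a perfect matching, i.e.\ an edge; hence every pair of vertices of $G$ is adjacent, $G=K_n$, and $\delta(G)=n-1=k+1$. (Incidentally $K_n$ is easily seen to be minimally $(n-2)$-factor-critical, but this is not needed for the statement.) I do not foresee any genuine obstacle: the corollary is essentially an arithmetic consequence of Theorem~\ref{degree} together with the elementary connectivity lower bound, and the only point requiring a moment's care is the boundary hypothesis $n\ge k+4$ (resp.\ $n\ge k+6$) of that theorem, which is met with equality for $k=n-4$ (resp.\ $k=n-6$) but fails for $k=n-2$, thereby forcing the separate treatment of that last case.
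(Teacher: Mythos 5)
Your proof is correct: the lower bound $\delta(G)\ge k+1$ from $(k+1)$-edge-connectivity (Lemma \ref{conn}), the substitutions $k=n-4$ and $k=n-6$ into the two parts of Theorem \ref{degree} (whose hypotheses $n\ge k+4$, resp.\ $n\ge k+6$, hold with equality), and the direct observation that an $(n-2)$-factor-critical graph is $K_n$ are all sound; this is essentially the original Favaron--Shi derivation, which the paper itself invokes when it states the corollary as ``immediate from Theorem \ref{degree}.'' However, it is not the proof this paper actually writes out: the whole point of Section 3 is to \emph{reprove} Corollary \ref{Special} without Theorem \ref{degree}. There the authors argue by contradiction from $\delta(G)\ge k+2$: using Lemma \ref{minimal} they find, for each edge $e=uv$, a set $S_e$ of $k$ vertices such that $G-e-S_e$ (a graph on $6$ vertices for $k=n-6$, on $4$ for $k=n-4$, where claw-freeness via Lemma \ref{n4} suffices) has no perfect matching, classify via Tutte's theorem the few possible configurations of $G-e-S_e$, and then derive contradictions by re-applying the same analysis to a neighbouring edge $e'$ and comparing the non-neighbourhoods $\overline{N[u]}\cap\overline{N[v]}$. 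What that more laborious route buys, and what your route cannot give, is extendability: Theorem \ref{degree} is too weak beyond $k=n-6$ (for $k=n-8$ it only yields $\delta(G)\le k+2$), whereas the configuration method is exactly what the paper pushes further in Section 4 to settle $k=n-8$. So your argument is a valid, shorter proof of the corollary as stated, but it rests entirely on the cited bound of Theorem \ref{degree} and bypasses the method the paper is developing.
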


O. Favaron and M. Shi  \cite{FS} also pointed out that  from the ear decomposition of factor-critical graphs (see \cite{LP}),
obviously a minimally 1-factor-critical graph has the minimum degree two. Further, since a minimally $k$-factor-critical graph is $(k+1)$-edge-connected and thus has the minimum degree at least $k+1$, O. Favaron and M. Shi asked a problem: does Corollary \ref{Special} hold  for general $k$?

Similarly, for minimally $q$-extendable graphs,
D. Lou and Q. Yu \cite{LY} conjectured that
any minimally $q$-extendable graph $G$ on $n$ vertices with $n\leq 4q$
has minimum degree $q + 1, 2q$ or $2q + 1$.
Afterward, Zhang et al. \cite{ZWL} formally reproposed the following conjecture
and pointed out that except the case $n=4q$, the conjecture of minimum degree of
minimally $q$-extendable graph is actually part of Conjecture \ref{conj}.

\begin{conj}[\cite{FS,ZWL}]\label{conj}
Let $G$ be a minimally $k$-factor-critical graph of order $n$  with $0\leq k<n$.
Then $\delta(G)=k+1$.
\end{conj}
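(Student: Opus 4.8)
\noindent The plan is to prove the hard direction by contradiction. Since a minimally $k$-factor-critical graph is $(k+1)$-edge-connected we always have $\delta(G)\ge k+1$, so it suffices to rule out $\delta(G)\ge k+2$, i.e. to produce a vertex of degree exactly $k+1$. The first step is to turn edge-minimality into structure via the Tutte-type characterization. Writing $o(H)$ for the number of odd components of $H$, that characterization says that $G$ (with $n\equiv k\pmod 2$) is $k$-factor-critical if and only if $o(G-S)\le|S|-k$ for every $S\subseteq V(G)$ with $|S|\ge k$. For each edge $e=xy$, the failure of $G-e$ to be $k$-factor-critical yields a set $S_e$ with $|S_e|\ge k$ violating this inequality in $G-e$; since deleting one edge raises the odd-component count by at most $2$, and a parity argument ($o(H)\equiv|V(H)|\pmod2$) forces the jump to be exactly $2$, the set $S_e$ must be \emph{tight}, i.e. $o(G-S_e)=|S_e|-k$, and $e$ must be a bridge inside an even component $D$ of $G-S_e$ that splits $D$ into two odd components. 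Thus minimality attaches to every edge a tight barrier witnessing it.

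The second step extracts quantitative control from such a barrier. From $o(G-S_e)=|S_e|-k$ and the even component $D$ (which has at least two vertices) we get $n-|S_e|\ge(|S_e|-k)+2$, which bounds $|S_e|\le\frac{n+k}{2}-1$ in the spirit of Theorem \ref{degree}; meanwhile $|S_e|\ge k$ gives $n-|S_e|\le m$, where $m:=n-k$ is even. Hence $G-S_e$ has between $\frac m2+1$ and $m$ vertices, so for each fixed $m$ there are only finitely many possibilities for the subgraph $G-S_e$ and its splitting edge $e$. In each configuration I would combine the $(k+1)$-edge-connectivity (every component of $G-S_e$ meets $S_e$ in at least $k+1$ edges) with the standing assumption $\delta(G)\ge k+2$ to show that the witnessed edge $e$ is in fact \emph{redundant}, i.e. that $o(G-S)\le|S|-k$ continues to hold in $G-e$ for every admissible $S$. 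This contradicts minimality and forces a vertex of degree $k+1$. This is the ``simple method'' behind Corollary \ref{Special} ($m=2,4,6$) and, through a longer but finite check, behind the new case $m=8$.

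To reach \emph{all} $0\le k<n$ I would try to organise these finite analyses by induction on $m$, aided by the elementary equivalence that $G$ is $k$-factor-critical if and only if $G-u$ is $(k-1)$-factor-critical for every vertex $u$. If minimality could be pushed down this reduction, an inductive hypothesis $\delta(G-u)=k$ would yield a vertex $w$ with $d_{G-u}(w)=k$, hence $d_G(w)\le k+1$; combined with $d_G(w)\ge k+1$ this gives the desired degree-$(k+1)$ vertex and closes the induction.

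The main obstacle is precisely this descent: edge-minimality of $G$ does not transfer to $G-u$, because an edge essential in $G$ may become redundant after deleting a vertex, so the induction cannot be driven by vertex deletion alone. The alternative of treating each even $m$ directly by the barrier method runs into a combinatorial explosion: as $m$ grows, the even component $D$ and the odd components of a tight barrier admit rapidly many configurations, and, more seriously, distinct edges are witnessed by \emph{different} barriers whose interaction must be controlled simultaneously rather than one barrier at a time. Taming this many-barrier coordination uniformly in $m$ is the step I expect to be hardest, and it is the single obstacle that a full proof of Conjecture \ref{conj} must overcome; the present proposal settles $m\le 8$ rigorously and isolates this coordination problem as the remaining difficulty.
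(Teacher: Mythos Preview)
The statement you are asked to prove is Conjecture~\ref{conj}, which the paper explicitly leaves \emph{open} for $2\le k\le n-8$; there is no proof in the paper to compare against. What the paper actually proves are the special cases $m:=n-k\in\{2,4,6\}$ (Corollary~\ref{Special}) and $m=8$ (Section~4). Your proposal is candid about this: it is a research plan that handles small $m$ and isolates the obstruction for general $m$, not a proof of the conjecture.

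For the cases the paper does treat, your outline is in the right spirit but misses the concrete mechanism. Your ``second step'' says you would show that, under $\delta(G)\ge k+2$, the witnessed edge $e$ is redundant. That is not how the paper argues. Given $e=uv$ and its configuration (your tight barrier $S_e$ together with the even component $D$), the paper does \emph{not} attack $e$ directly; instead it selects a different edge $e'$ (incident with $u$, or with a vertex of the configuration, chosen according to which configuration occurred) and shows that $e'$ cannot be witnessed by \emph{any} $S_{e'}$. The engine for this is a bookkeeping of the common non-neighbourhood $\overline{N[u]}\cap\overline{N[v]}$: each configuration forces specific constraints on the size and structure (independence, induced edges) of this set, and by choosing $e'$ appropriately one makes all configurations for $e'$ incompatible with those constraints. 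This ``edge-pivoting plus non-neighbourhood counting'' is the substance of Claims~1 and~2 in the paper's proofs for $m=6$ and $m=8$, and it is precisely the ``many-barrier coordination'' you flag as the hard part; your outline does not supply it even for the small cases, so as written it does not settle $m\le 8$ rigorously.

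Your proposed induction on $m$ via vertex deletion is correctly diagnosed as blocked: edge-minimality of $G$ need not pass to $G-u$. That observation is accurate but leaves the conjecture where the paper leaves it --- open.
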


From the above discussions we know that Conjecture \ref{conj} is true for $k=1, n-2, n-4, n-6$. To date Conjecture \ref{conj} remains open for $2\leq k\leq n-8$ of the same parity as $n$.

However, recently some great progresses have been made on related bicritical graphs. A brick is {\em minimal} if the removal of any edge results in a graph that is non-brick. From the construction of a brick
M. H. de Carvalho et al. \cite{CLM} proved that
every minimal  brick contains a vertex of degree three.
S. Norine and R. Thomas \cite{NT} proved that every minimal brick has at least
three vertices of degree three.
Latter, F. Lin et al. \cite{LZL} obtained that every minimal brick  has
at least four vertices of degree three.
At the same time, H. Bruhn and M. Stein \cite{BS} showed that
every minimal brick $G$ has at least $\frac{1}{9}|V(G)|$
vertices of degree at most four.

%On the other hand, a connected graph $G$ of order $n$ is said to be $q$-extendable,
%where $q$ is an integer with $0\leq q\leq \frac{n-2}{2}$, if $G$
%contains a matching of size $q$ and any matching in $G$ of size $q$ is contained
%in a perfect matching of $G$.
%O. Favaron \cite{213} and Q. Yu \cite{6} investigated the relationship between
%$k$-factor-critical graphs and $q$-extendable graphs.
%Z. Zhang et al. \cite{85} studied some relation between extendibility and
%factor-criticality of graphs.

In this paper, we use a novel and simple method to reprove Corollary \ref{Special}.
Continuing this method, we can prove  Conjecture \ref{conj} to be true for $k=n-8$.
On the other hand, the only $(n-2)$-factor-critical graph of order $n$ is complete graph $K_{n}$. O. Favaron and M. Shi  \cite{FS} characterized minimally $(n-4)$-factor-critical graphs of order $n$ in the degree distribution. Finally we obtain that every minimally $(n-6)$-factor-critical graph of order $n$ has at most $n-\Delta(G)$ vertices with the maximum degree $\Delta(G)$
for $n-4\leq \Delta(G)\leq n-1$.

\section{Some preliminaries}

In this section we give some graph-theoretical terminology and notation, and some preliminary results for late use.
For a vertex $x$ of a graph $G$, the {\em neighborhood} $N(x)$ of $x$ is the set of vertices of $G$ adjacent to $x$,
and the {\em closed neighborhood} is $N[x]=N(x)\cup \{x\}$. Then
$\overline{N[x]}:=V(G)\setminus N[x]$ is called the {\em non-neighborhood} of $x$ in $G$, which has a critical role in subsequent discussions.

A vertex of a graph $G$ with degree one is called a {\em pendent vertex}.
An {\em independent set} in a graph is a set of pairwise nonadjacent vertices.
For a set $S\subseteq V(G)$, let $G[S]$ denote  the subgraph of $G$ induced by $S$ in $G$,
and $G-S=G[V(G)-S]$.
For an edge $e$ of $G$, $G-e$ stands for the graph with vertex set
$V(G)$ and edge set $E(G)-\{e\}$. Similarly, for distinct vertices $u$ and $v$ with $e=uv \notin E(G)$, $G+e$ stands for the
graph with vertex set $V(G)$ and edge set $E(G)\cup \{e\}$.
A {\em claw} of $G$ is an induced subgraph isomorphic to the star
$K_{1,3}$.
%A graph containing no claw is said to be claw-free.

A graph $G$ is {\em trivial} if it has only one vertex. Let $C_{o}(G)$ be
the number of odd components of $G$. The following is Tutte's $1$-factor theorem.

\begin{thm}[\cite{TTW}]\label{tutte} A graph $G$ has a $1$-factor
if and only if $C_{o}(G-X)\leq |X|$ for any $X \subseteq V(G)$.
\end{thm}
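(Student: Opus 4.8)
The plan is to prove the two implications separately, with the forward direction being routine and the converse forming the heart of the argument. For necessity, suppose $G$ has a $1$-factor $M$ and fix $X\subseteq V(G)$. Each odd component $D$ of $G-X$ has odd order, so $M$ cannot match all of $D$ internally; hence at least one vertex of $D$ is matched by $M$ to a vertex of $X$. Assigning to each odd component such a partner in $X$ gives an injection from the odd components of $G-X$ into $X$ (distinct components receive distinct partners, since each vertex of $X$ is the $M$-partner of at most one vertex), whence $C_{o}(G-X)\leq |X|$.

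For sufficiency, assume $G$ satisfies Tutte's condition for every $X$; I first record that taking $X=\emptyset$ forces $C_{o}(G)=0$, so every component of $G$ is even and in particular $|V(G)|$ is even. Suppose for contradiction that $G$ has no $1$-factor. Adding an edge never increases the number of odd components of any $G-X$, so Tutte's condition is preserved under edge addition, while $K_n$ (for $n$ even) has a $1$-factor; I may therefore pass to an edge-maximal graph $G$ on the same vertex set that still satisfies Tutte's condition yet has no $1$-factor. Maximality means that inserting any missing edge creates a $1$-factor, and any such $1$-factor must use the newly added edge.

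The central structural step is to show that, for $U:=\{v\in V(G): d_G(v)=n-1\}$ (the vertices adjacent to all others), the graph $G-U$ is a disjoint union of complete graphs. I would argue by contradiction: otherwise there are vertices $a,b,c$ of $G-U$ with $ab,bc\in E(G)$ but $ac\notin E(G)$, and since $b\notin U$ there is a vertex $d$ with $bd\notin E(G)$. By maximality, $G+ac$ and $G+bd$ each have a $1$-factor, say $M_1\ni ac$ and $M_2\ni bd$. Examining $M_1\triangle M_2$, whose components are even cycles alternating between $M_1$ and $M_2$, I focus on the cycle $C$ through $bd$: if $ac\notin C$, swapping $M_1$- and $M_2$-edges along $C$ yields a $1$-factor avoiding both $ac$ and $bd$, i.e.\ a $1$-factor of $G$; if $ac\in C$, one instead reroutes along $C$ using the genuine edges $ab$ and $bc$ to splice together a $1$-factor of $G$ that still avoids both forbidden edges. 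Either way this contradicts that $G$ has no $1$-factor. I expect the case $ac\in C$ to be the main obstacle, since it is exactly where the hypotheses $ab,bc\in E(G)$ are used and the correct splicing must be pinned down by a symmetry argument on the two arcs of $C$ between $b$ and the edge $ac$.

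Finally, with $G-U$ a disjoint union of complete graphs, I construct a $1$-factor directly and reach the desired contradiction. Each even complete component has a $1$-factor; in each odd complete component I delete one vertex (leaving an even complete graph, which has a $1$-factor) and match the deleted vertex to a distinct vertex of $U$, which is possible because $C_{o}(G-U)\leq |U|$ by Tutte's condition and every vertex of $U$ is adjacent to all others. A parity count, using that $|V(G)|$ is even, shows the number of still-unused vertices of $U$ is even, and since $U$ induces a complete graph they can be matched among themselves. Assembling these pieces produces a $1$-factor of $G$, contradicting the choice of $G$ and completing the proof.
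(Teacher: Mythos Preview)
The paper does not prove this statement at all: Theorem~\ref{tutte} is simply quoted from \cite{TTW} as a classical preliminary, with no argument given. So there is nothing in the paper to compare your proof against.

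That said, your proposal is the standard modern proof (the maximal-counterexample argument, essentially due to Lov\'asz/Anderson), and the outline is correct. The necessity direction and the final assembly of a $1$-factor from $U$ and the complete components of $G-U$ are fine. The one place where you are (appropriately) cautious is Case~2 of the structural step, when both $ac$ and $bd$ lie on the same alternating cycle $C$ of $M_1\triangle M_2$. There the precise construction is: starting at $b$, walk along $C$ away from $d$ until you first meet $\{a,c\}$; by symmetry say you reach $a$, and call this path $P$. Since $P$ begins with an $M_1$-edge (at $b$) and ends with an $M_2$-edge (at $a$, because $ac\in M_1$), the $M_1$-edges of $P$ match every vertex of $P$ except $a$. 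Now take $M_2$ on $V(G)\setminus V(P)$ together with the edge $bd$ replaced as follows: use the $M_1$-edges on $P$, use $M_2$ elsewhere, and finally add the genuine edge $ab\in E(G)$ to cover $a$ (and note $b$'s $M_2$-partner $d$ is still matched, since $d$ lies on $C\setminus P$ where $M_2$ and $M_1$ agree after the swap outside $P$). Carrying this out carefully yields a $1$-factor of $G$, the desired contradiction. Once you pin down this splicing explicitly, your write-up is complete.
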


The following characterization and  connectivity  of $k$-factor-critical graphs
were obtained by O. Favaron \cite{F} and Q. Yu \cite{Y} independently.

\begin{lem}[\cite{F, Y}]\label{FY}
A graph $G$ is $k$-factor-critical if and only if $C_{o}(G-B)\leq |B|-k$ for
any $B \subseteq V(G)$ with $|B|\geq k$.
\end{lem}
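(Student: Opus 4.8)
The plan is to deduce Lemma \ref{FY} directly from Tutte's $1$-factor theorem (Theorem \ref{tutte}). By definition, $G$ being $k$-factor-critical means exactly that $G-S$ has a perfect matching for every $k$-element set $S\subseteq V(G)$, and Tutte's theorem translates each such statement into a counting condition. So the whole argument reduces to choosing the right test set $X$ in Tutte's criterion and doing a little set-theoretic bookkeeping; I do not expect any genuine difficulty, only the need to be careful with which subsets play which role.

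For the forward implication I would assume $G$ is $k$-factor-critical and fix $B\subseteq V(G)$ with $|B|\geq k$. Pick any $k$-subset $S\subseteq B$. Then $G-S$ has a perfect matching, so Tutte's theorem applied to $G-S$ gives $C_{o}((G-S)-X)\leq |X|$ for every $X\subseteq V(G)\setminus S$. Taking $X=B\setminus S$, one has $(G-S)-X=G-B$ and $|X|=|B|-k$, hence $C_{o}(G-B)\leq |B|-k$, as required.

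For the reverse implication I would assume $C_{o}(G-B)\leq |B|-k$ for all $B$ with $|B|\geq k$, fix an arbitrary $k$-subset $S\subseteq V(G)$, and verify Tutte's condition for $G-S$. Given any $X\subseteq V(G-S)$, set $B=S\cup X$; then $|B|=k+|X|\geq k$ and $G-B=(G-S)-X$, so the hypothesis yields $C_{o}((G-S)-X)=C_{o}(G-B)\leq |B|-k=|X|$. Since $X$ was arbitrary, Tutte's theorem shows $G-S$ has a perfect matching, and since $S$ was an arbitrary $k$-subset, $G$ is $k$-factor-critical.

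The only points needing a moment's care are the identity $G-B=(G-S)-X$ with $B=S\cup X$ (a disjoint union once $S\cap X=\emptyset$), and the fact that Tutte's condition must be checked for \emph{all} $X$ including $X=\emptyset$; the latter choice forces $C_{o}(G-S)=0$, i.e.\ $n-k$ even, so the degenerate parity case is absorbed automatically and requires no separate discussion. In short, this lemma is the standard structural tool underpinning the paper; the substantive work lies later, in the ``simple method'' used to push Conjecture \ref{conj} to $k=n-8$, not here.
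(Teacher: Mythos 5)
Your argument is correct and is the standard deduction of this criterion from Tutte's $1$-factor theorem (Theorem \ref{tutte}); the paper itself only cites Lemma \ref{FY} from \cite{F, Y} without proof, and your derivation matches the usual proof given there. Nothing to add.
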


\begin{lem}[\cite{F, Y}]\label{conn} If $G$ is $k$-factor-critical for some $1\leq k< n$ with $n+k$ even,
then $G$ is $k$-connected, $(k+1)$-edge-connected and $(k-2)$-factor-critical if $k\geq 2$.
\end{lem}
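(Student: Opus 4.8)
The plan is to derive all three conclusions from the Favaron--Yu criterion (Lemma~\ref{FY}), after first recording the elementary bound $\delta(G)\geq k+1$. That bound is immediate: if some $w$ had $d_G(w)\leq k$, then since $n>k$ one can choose $B\supseteq N(w)$ with $|B|=k$ and $w\notin B$, making $\{w\}$ an odd component of $G-B$, so $C_o(G-B)\geq 1>0=|B|-k$, contradicting Lemma~\ref{FY}. I shall also use that, since $n+k$ is even, removing $j$ vertices leaves an even set precisely when $j\equiv k\pmod 2$.

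\emph{The $(k-2)$-factor-criticality (when $k\geq 2$).} By Lemma~\ref{FY} it suffices to prove $C_o(G-B)\leq |B|-(k-2)$ whenever $|B|\geq k-2$. If $|B|\geq k$ this is weaker than what $k$-factor-criticality already supplies. If $|B|=k-1$ then $V(G)\setminus B$ is odd, so $C_o(G-B)\geq 1$; on the other hand, enlarging $B$ by one vertex $v$ to size $k$ makes $G-B-v$ have only even components, and restoring $v$ fuses some of those with $v$ into a single odd component, so $C_o(G-B)=1$. If $|B|=k-2$ then every vertex of $G-B$ has degree at least $\delta(G)-|B|\geq 3$; choosing any $u$, a neighbour $v$ of $u$ in $G-B$, and a perfect matching $M$ of $G-(B\cup\{u,v\})$ (legitimate since $|B\cup\{u,v\}|=k$), the set $M\cup\{uv\}$ is a perfect matching of $G-B$, so $C_o(G-B)=0$.

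\emph{The $(k+1)$-edge-connectivity.} Suppose, for contradiction, that there is an edge cut $[A,\overline A]$ with at most $k$ edges. A short degree count inside $A$ (using $\delta(G)\geq k+1$ and that at most $k$ edges leave $A$) gives $|A|\geq k+2$, and symmetrically $|\overline A|\geq k+2$. Let $W$ be a minimum vertex cover of the cut, so $|W|$ is at most the number of cut edges, hence at most $k$; put $W_A=W\cap A$. For every $B$ with $W\subseteq B$ and $|B|=k$ the graph $G-B$ splits into $G[A\setminus B]$ and $G[\overline A\setminus B]$ with no edge between them, so Lemma~\ref{FY} forces both to have only even components; in particular $|A|-|B\cap A|$ is even. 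If $|W|<k$, then using $|A|,|\overline A|\geq k+2$ one can distribute the surplus $k-|W|$ vertices so that $|B\cap A|$ equals either $|W_A|$ or $|W_A|+1$, which makes two integers of opposite parity both even --- impossible. If $|W|=k$, then the cut has exactly $k$ edges and vertex cover number $k$, hence is a matching $x_1y_1,\dots,x_ky_k$ with $x_i\in A$; now $B=\{x_1,\dots,x_k\}$ is a vertex cover, forcing $|\overline A|$ even, while $B=\{x_1,\dots,x_{k-1},y_k\}$ is also a vertex cover, forcing $|\overline A|-1$ even --- again impossible. So no such edge cut exists.

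\emph{The $k$-connectivity, and where the difficulty lies.} Suppose $S$ is a vertex cut with $|S|=s\leq k-1$ and let $C_1,\dots,C_p$ ($p\geq 2$) be the components of $G-S$; the degree bound gives $|C_i|\geq k+2-s\geq 3$. Using a spanning tree of $C_1$ and peeling off leaves, delete $k-s$ vertices of $C_1$ while keeping it connected, leaving $C_1'$ with $|C_1'|\geq 2$. If $|C_1'|$ is odd, set $B=S\cup(C_1\setminus C_1')$; then $|B|=k$ and $C_1'$ is an odd component of $G-B$. If $|C_1'|$ is even, instead delete one fewer vertex of $C_1$ together with one vertex of $C_2$; the surviving (now odd) part of $C_1$ is again an odd component of $G-B$ with $|B|=k$. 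Either way $C_o(G-B)\geq 1>0=|B|-k$, contradicting Lemma~\ref{FY}. I expect the main obstacle to be precisely this parity bookkeeping in the two connectivity arguments: the naive ``delete $k$ vertices and point to an odd component'' fails whenever the pieces happen to come out even, and one must either shuttle a deleted vertex from one side of the cut to the other or trim one fewer leaf and compensate elsewhere to fix the parity; by contrast the $(k-2)$-factor-criticality is routine once $\delta(G)\geq k+1$ is known.
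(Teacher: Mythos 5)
The paper does not prove this lemma at all: it is quoted from Favaron \cite{F} and Yu \cite{Y}, so there is no in-paper argument to compare yours against. Judged on its own, your proof is correct and self-contained, resting only on Lemma~\ref{FY} and the parity of $n+k$. The preliminary bound $\delta(G)\geq k+1$ is fine; for the $(k-2)$-factor-criticality you could in fact stop after your $|B|=k-2$ case, since the definition only asks for a perfect matching of $G-B$ when $|B|=k-2$ (your $|B|=k-1$ and $|B|\geq k$ cases are only needed because you route the claim through the Tutte-type criterion, but they are also correct). The edge-connectivity argument is sound: the degree count giving $|A|,|\overline A|\geq k+2$ is the standard one, the parity trick of sliding one surplus vertex of $B$ across the cut settles $|W|<k$, and in the case $|W|=k$ you should add the one-line justification that a minimum vertex cover of size equal to the number of cut edges forces the cut edges to be pairwise nonadjacent (two cut edges sharing an endpoint would yield a cover of size at most $k-1$); with that, exchanging $x_k$ for $y_k$ gives the contradiction you state. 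The vertex-connectivity argument also works, including the boundary case $s=k-1$ of your ``even'' branch, where no vertex of $C_1$ is peeled and the extra deleted vertex is taken from $C_2$; the inequality $|C_i|\geq k+2-s$ guarantees the peeled component keeps at least two (hence, after the parity fix, an odd number of) vertices. So the proposal establishes the lemma by a legitimate, essentially elementary route; the ``difficulty'' you anticipate is exactly the parity bookkeeping, and you handle it correctly.
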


O. Favaron and M. Shi  \cite{FS} characterized minimally $k$-factor-critical graphs.

\begin{lem}[\cite{FS}]\label{minimal}
Let $G$ be a $k$-factor-critical graph. Then $G$ is minimal
if and only if for each $e=uv\in E(G)$, there exists $S_{e}\subseteq V(G)-\{u,v\}$ with $|S_{e}|=k$
such that every perfect matching of $G-S_{e}$ contains $e$.
\end{lem}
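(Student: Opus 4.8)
The plan is to prove both directions of the biconditional directly from the definition of $k$-factor-criticality, exploiting the trivial identity $(G-e)-B=(G-B)-e$ for any vertex set $B$ and edge $e$, and the fact that $k$-factor-criticality of $G$ guarantees a perfect matching of $G-B$ whenever $|B|=k$.

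For the backward implication, I would assume that for every edge $e=uv$ there is a set $S_e\subseteq V(G)-\{u,v\}$ with $|S_e|=k$ such that every perfect matching of $G-S_e$ contains $e$, and then fix an arbitrary edge $e$ to show that $G-e$ fails to be $k$-factor-critical. Since $G$ is $k$-factor-critical and $|S_e|=k$, the graph $G-S_e$ has a perfect matching, and by hypothesis every such matching uses $e$. Hence $(G-e)-S_e=(G-S_e)-e$ can admit no perfect matching, because any such matching would be a perfect matching of $G-S_e$ avoiding $e$. Deleting the $k$ vertices of $S_e$ from $G-e$ thus destroys all perfect matchings, so $G-e$ is not $k$-factor-critical; as $e$ was arbitrary, $G$ is minimal.

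For the forward implication, I would assume $G$ is minimal and fix an edge $e=uv$. Minimality gives that $G-e$ is not $k$-factor-critical, so by definition there is a set $B$ with $|B|=k$ for which $(G-e)-B$ has no perfect matching. The decisive step is to show $u,v\notin B$: if, say, $u\in B$, then $e$ is already absent from the induced subgraph on $V(G)-B$, so $(G-e)-B=(G-B)-e=G-B$, which has a perfect matching because $G$ is $k$-factor-critical and $|B|=k$, contradicting the choice of $B$. Therefore $B\subseteq V(G)-\{u,v\}$, and I set $S_e:=B$. Finally I would verify the required matching property: if some perfect matching $M$ of $G-S_e$ omitted $e$, then $M$ would also be a perfect matching of $(G-S_e)-e=(G-e)-B$, contradicting that the latter has none; hence every perfect matching of $G-S_e$ contains $e$.

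The argument is short, and the only delicate point is the forward direction's observation that the witnessing set $B$ must avoid both endpoints of $e$. This is precisely where the $k$-factor-criticality of $G$ is indispensable: it forces $G-B$ to have a perfect matching for every $k$-set $B$, so the obstruction to a perfect matching in $(G-e)-B$ must stem from the deletion of the edge $e$ itself rather than from the deletion of one of its endpoints.
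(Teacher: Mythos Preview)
Your proof is correct. Both directions are handled cleanly: in the backward direction you correctly observe that a perfect matching of $(G-e)-S_e$ would be a perfect matching of $G-S_e$ avoiding $e$, and in the forward direction the key observation that the witnessing $k$-set $B$ must miss both endpoints of $e$ (lest $(G-e)-B=G-B$ have a perfect matching by the $k$-factor-criticality of $G$) is exactly what is needed.

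The paper itself does not supply a proof of this lemma; it is quoted from Favaron and Shi~\cite{FS} as a preliminary result. Your argument is the natural direct proof from the definitions and is essentially the standard one, so there is nothing to contrast.
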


\begin{lem}[\cite{FS}]\label{n6} Let $G$ be a $k$-factor-critical graph of order $n>k+2$
and maximum degree $\Delta(G)=n-1$. Then $G$ is minimal if and only if $G$ contains one vertex of
degree $n-1$ and $n-1$ vertices of degree $k+1$.
\end{lem}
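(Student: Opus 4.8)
The plan is to establish the two implications separately, relying on the Tutte-type characterizations in Lemmas~\ref{tutte} and~\ref{FY} and on the minimality criterion in Lemma~\ref{minimal}. Throughout, let $u$ denote a fixed vertex with $d_G(u)=n-1$ (one exists since $\Delta(G)=n-1$), and record the following standing observations: since $G$ is $k$-factor-critical, $n\equiv k\pmod 2$, so $n-k$ is a positive even integer and $n>k+2$ forces $n-k\geq 4$; moreover $\delta(G)\geq k+1$ by Lemma~\ref{conn}. The single feature doing most of the work is that $u$ is adjacent to every other vertex, so $G-B$ is connected whenever $u\notin B$.

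For the ``if'' direction, suppose $G$ has one vertex of degree $n-1$ and $n-1$ vertices of degree $k+1$, and let $e=xy$ be any edge. As $x\neq y$, at least one endpoint, say $y$, is different from $u$ and hence satisfies $d_G(y)=k+1$. Put $S_e=N_G(y)\setminus\{x\}$; then $|S_e|=k$ and $S_e\subseteq V(G)\setminus\{x,y\}$. In $G-S_e$ the vertex $y$ has $x$ as its only neighbor, so every perfect matching of $G-S_e$ contains $e$; such a matching exists because $G$ is $k$-factor-critical and $|S_e|=k$. By Lemma~\ref{minimal}, $G$ is minimal. This implication is immediate.

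For the ``only if'' direction, assume $G$ is minimal and fix an arbitrary vertex $v\neq u$, so $uv\in E(G)$. Since $G-uv$ is not $k$-factor-critical, Lemma~\ref{FY} supplies a set $B$ with $|B|\geq k$ and $C_o((G-uv)-B)\geq |B|-k+1$. If $u\in B$ or $v\in B$, then $(G-uv)-B=G-B$, contradicting that $G$ is $k$-factor-critical; hence $u,v\notin B$ and $G-B$ is connected. Deleting the edge $uv$ from $G-B$ therefore either leaves a connected graph or isolates exactly $v$: if $v$ kept a neighbor $w\neq u$ in $G-B$, it would remain joined, via $w$, to $u$ and thus to all of $G-B$. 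In the connected sub-case, $C_o((G-uv)-B)\leq 1$ forces $|B|=k$ and that $G-B$ has the odd order $n-k$, contradicting $n-k$ even. So $v$ is isolated in $(G-uv)-B$, whose components are then $\{v\}$ and the connected graph $G-B-v$; hence $C_o((G-uv)-B)\leq 2$, so $|B|\leq k+1$, and a short parity check using $n\equiv k\pmod 2$ rules out $|B|=k+1$, leaving $|B|=k$. Isolation of $v$ gives $N_G(v)\subseteq B\cup\{u\}$, so $d_G(v)\leq k+1$ and therefore $d_G(v)=k+1$. As $v$ was arbitrary and $n-1>k+1$, the vertex $u$ is the unique vertex of degree $n-1$ and the remaining $n-1$ vertices all have degree $k+1$.

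The one genuinely delicate point is the component-counting in the last paragraph: one has to verify that removing a single edge from $G-B$ can raise $C_o$ by at most $2$, deduce $|B|\in\{k,k+1\}$ from this, and then use $n\equiv k\pmod 2$ (with $n-k\geq 4$ to dispose of degenerate small cases) both to exclude $|B|=k+1$ and to see that the extra odd component must be the singleton $\{v\}$. Once that is in hand, the rest reduces to routine manipulation of the criteria in Lemmas~\ref{FY}, \ref{conn} and~\ref{minimal}.
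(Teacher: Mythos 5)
Your proof is correct. Note, however, that the paper itself does not prove this statement: Lemma~\ref{n6} is quoted from Favaron and Shi \cite{FS} without proof, so there is no in-paper argument to compare against. Your argument is self-contained and uses only tools the paper does state: Lemma~\ref{minimal} for the ``if'' direction (taking $S_e=N_G(y)\setminus\{x\}$ for an endpoint $y$ of degree $k+1$, which forces $y$ to be matched to $x$), and Lemma~\ref{FY} together with $\delta(G)\geq k+1$ from Lemma~\ref{conn} for the ``only if'' direction. The key steps all check out: a violating set $B$ for $G-uv$ must avoid $u$ and $v$ (else it violates criticality of $G$ itself); since $u$ is universal, $(G-uv)-B$ is either connected or splits exactly into $\{v\}$ and the connected graph $G-B-v$ (which contains $u$, so it is connected even in the degenerate case); the parity of $n-k$ then kills the connected subcase and the possibility $|B|=k+1$, leaving $|B|=k$ and $N_G(v)\subseteq B\cup\{u\}$, hence $d_G(v)=k+1$ for every $v\neq u$. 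This is essentially the natural Tutte-type argument one would expect behind the cited result, so I have no corrections to offer.
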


M.D. Plummer and A. Saito \cite{PS} obtained a necessary and sufficient condition of
$k$-factor-critical graphs.

\begin{thm}[\cite{PS}]\label{PS} Let $G$ be a graph of order $n$ and let $x$ and $y$ be a pair of nonadjacent
vertices of $G$ with $d_{G}(x)+ d_{G}(y)\geq n+k-1$. Then $G$ is
$k$-factor-critical if and only if $G\cup \{xy\}$ is $k$-factor-critical.
\end{thm}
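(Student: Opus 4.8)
The plan is to prove both implications of this equivalence, the forward one being immediate and the reverse one carrying all the content. For the forward direction, if $G$ is $k$-factor-critical then for every $S\subseteq V(G)$ with $|S|=k$ the graph $(G\cup\{xy\})-S$ contains the spanning subgraph $G-S$, which has a perfect matching; hence $(G\cup\{xy\})-S$ has one too, so $G\cup\{xy\}$ is $k$-factor-critical.

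For the converse I would argue by contradiction: suppose $G':=G\cup\{xy\}$ is $k$-factor-critical but $G$ is not. First note that $n-k$ is even, since removing any $k$ vertices from $G'$ leaves a graph with a perfect matching. By Lemma~\ref{FY} applied to $G$, there is a set $B\subseteq V(G)$ with $|B|\geq k$ and $C_{o}(G-B)>|B|-k$; because $C_{o}(G-B)\equiv n-|B|\equiv|B|-k\pmod 2$, in fact $C_{o}(G-B)\geq|B|-k+2$. Applying Lemma~\ref{FY} to $G'$ gives $C_{o}(G'-B)\leq|B|-k$. Now $G'-B$ differs from $G-B$ only in the possible presence of the edge $xy$, and adding a single edge lowers the number of odd components by at most $2$, and by exactly $2$ only when its two ends lie in two distinct odd components of $G-B$. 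Comparing the two inequalities above forces precisely this situation: $x,y\notin B$, the vertices $x$ and $y$ lie in distinct odd components $C_{1}\ni x$ and $C_{2}\ni y$ of $G-B$, and $C_{o}(G-B)=|B|-k+2$ exactly.

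It remains to derive the contradiction by counting. In $G$, the vertex $x$ has neighbours only inside $C_{1}$ or inside $B$, so $d_{G}(x)\leq(|C_{1}|-1)+|B|$, and similarly $d_{G}(y)\leq(|C_{2}|-1)+|B|$. Besides $C_{1}$ and $C_{2}$, the graph $G-B$ has $|B|-k$ further odd components, each of order at least $1$, whence $|C_{1}|+|C_{2}|\leq n-|B|-(|B|-k)=n-2|B|+k$. Adding the two degree bounds gives
\[
d_{G}(x)+d_{G}(y)\leq|C_{1}|+|C_{2}|+2|B|-2\leq n+k-2,
\]
which contradicts the hypothesis $d_{G}(x)+d_{G}(y)\geq n+k-1$. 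Hence $G$ is $k$-factor-critical, completing the converse.

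The only genuinely delicate point is extracting the exact configuration --- $x$ and $y$ in two distinct odd components of $G-B$ together with the equality $C_{o}(G-B)=|B|-k+2$ --- from the pair of component-count inequalities; this is exactly where the parity observation earns its keep, since it is what makes the final estimate land one unit below the hypothesised degree sum. I expect that extraction to be the main (and still rather mild) obstacle; the remaining steps are routine.
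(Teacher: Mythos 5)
Your proof is correct. Note that the paper itself gives no argument for this statement: it is quoted as Theorem~\ref{PS} directly from the cited source of Plummer and Saito, so there is no internal proof to compare against. Your argument is the natural (and essentially the standard) one: the forward direction is trivial since adding an edge cannot destroy $k$-factor-criticality, and for the converse you combine the Favaron--Yu criterion (Lemma~\ref{FY}) applied to both $G$ and $G\cup\{xy\}$ with the fact that one added edge can decrease the number of odd components by at most two; the parity step $C_{o}(G-B)\equiv n-|B|\equiv |B|-k \pmod 2$ correctly upgrades the strict inequality to $C_{o}(G-B)\geq |B|-k+2$, which pins down the extremal configuration ($x,y\notin B$ in distinct odd components, with equality throughout), and the final count $d_{G}(x)+d_{G}(y)\leq |C_{1}|+|C_{2}|+2|B|-2\leq n+k-2$ cleanly contradicts the degree-sum hypothesis. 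All steps check out, including the lower bound $|C_{1}|+|C_{2}|\leq n-2|B|+k$ coming from the $|B|-k$ remaining odd components each having at least one vertex.
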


\begin{cor}\label{n2} Let $G$ be a minimally $k$-factor-critical
graph of order $n$, where $n\geq k+5$.
If $\Delta(G)=n-2$, then there are at most two vertices with degree $n-2$
and such two vertices are not adjacent.
\end{cor}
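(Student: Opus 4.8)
The plan is to argue by contradiction using the two-vertex structure forced by the degree condition and the characterization of minimality in Lemma \ref{minimal}, together with the Plummer–Saito result Theorem \ref{PS}. First I would suppose $\Delta(G)=n-2$ and that there are three distinct vertices $u,v,w$ of degree $n-2$, or that there are two adjacent vertices $u,v$ of degree $n-2$. Since $d_G(u)=n-2$, the vertex $u$ has exactly one non-neighbor; write $\overline{N[u]}=\{u'\}$, and similarly $\overline{N[v]}=\{v'\}$, $\overline{N[w]}=\{w'\}$. The key elementary observation is that if two vertices $x,y$ of degree $n-2$ are adjacent, then $x\neq y'$ and $y\neq x'$, so $x'$ and $y'$ are (possibly equal) vertices different from $x,y$; and $x$ is adjacent to every vertex except $x'$, in particular to $y'$, etc.

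The heart of the argument is to locate a non-edge $e=xy$ whose endpoints have large degree sum, and then derive a contradiction with minimality. Concretely, if $u,v$ both have degree $n-2$ and are \emph{adjacent}, consider the edge $e=uv\in E(G)$. By Lemma \ref{minimal} there is a set $S_e\subseteq V(G)\setminus\{u,v\}$ with $|S_e|=k$ such that every perfect matching of $G-S_e$ uses $e$. Now $G-S_e$ has order $n-k\geq 5$ and contains $u,v$; I would examine the non-neighbors $u',v'$ relative to $S_e$. If, say, $u'\in S_e$, then in $G-S_e$ the vertex $u$ is adjacent to \emph{all} other vertices, so $G-S_e$ is a graph on $n-k$ vertices (even, since $n+k$ is even) with a universal vertex $u$; such a graph minus any other vertex still has a near-perfect structure, and one checks directly that it has a perfect matching avoiding the edge $uv$ (match $u$ to some $z\neq v$ and complete the matching on the rest, which is possible because $G-S_e-\{u,z\}$ is $(k+2)$-factor-critical-type large and dense by Lemma \ref{conn} applied to $G$ and Lemma \ref{FY}) — contradiction. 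So $u'\notin S_e$ and similarly $v'\notin S_e$; then in $G-S_e$ we have $d(u)+d(v)\geq (n-k-2)+(n-k-2)$, and since $u\sim v$, Theorem \ref{PS} (or a direct counting argument via Lemma \ref{FY}) forces the existence of a perfect matching of $G-S_e$ avoiding $e$, again a contradiction. The case of three pairwise non-adjacent vertices $u,v,w$ of degree $n-2$ is handled similarly: among three vertices each missing exactly one neighbor, some two of them, say $u,v$, must be adjacent (since if $u,v,w$ were pairwise non-adjacent then $v,w\in\overline{N[u]}$ forces $|\overline{N[u]}|\geq 2$, contradicting $d_G(u)=n-2$), and we are back to the adjacent case.

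Thus the statement splits cleanly: (i) two vertices of degree $n-2$ cannot be adjacent, proved by the $S_e$-analysis above; and (ii) there cannot be three vertices of degree $n-2$, because three such vertices always contain an adjacent pair, contradicting (i). I would present (ii) first as a short combinatorial lemma on non-neighborhoods of vertices of degree $n-2$, then devote the main work to (i).

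The step I expect to be the main obstacle is the sub-case $u'\in S_e$ (or $v'\in S_e$): here Theorem \ref{PS} does not directly apply since $uv$ is an edge of $G-S_e$, so I must argue by hand that a universal vertex in the even graph $G-S_e$ can be matched to a vertex other than $v$ while the remainder still admits a perfect matching. This requires carefully invoking that $G$ is $k$-factor-critical, hence by Lemma \ref{conn} also $(k-2)$-factor-critical and highly connected, so that removing $S_e$ plus two more well-chosen vertices still leaves an even graph satisfying Tutte's condition in Theorem \ref{tutte}; keeping track of parity ($n-k$ even) and of exactly which removed vertex-set has size $k+2$ is the delicate bookkeeping. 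Once that sub-case is closed, the Plummer–Saito sub-case ($u',v'\notin S_e$) is immediate, and the corollary follows.
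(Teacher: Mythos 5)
Your overall skeleton (three vertices of degree $n-2$ force an adjacent pair, so it suffices to rule out an adjacent pair) matches the paper, but your treatment of the adjacent pair has a genuine gap. In the subcase $u'\in S_e$ you claim that after matching the universal vertex $u$ to some $z\neq v$ the rest can be matched because $G-S_e-\{u,z\}$ is ``$(k+2)$-factor-critical-type''. This does not follow from anything available: $G$ is $k$-factor-critical, and Lemma \ref{conn} only gives $(k-2)$-factor-criticality; removing the $k+2$ vertices $S_e\cup\{u,z\}$ is \emph{more} than $k$ vertices, and $k$-factor-criticality gives no control there. So the step ``complete the matching on the rest'' is exactly the assertion you need to prove, not a consequence of Lemma \ref{conn} or Lemma \ref{FY}. (This subcase is in fact closable, e.g.\ by an alternating swap: take a perfect matching $M$ of $G-S_e$, which must contain $uv$; since $v$ has at least $n-2-k\geq 3$ neighbours outside $S_e$, pick a neighbour $w\neq u$ of $v$ in $G-S_e$ with $ww'\in M$, and replace $\{uv,ww'\}$ by $\{vw,uw'\}$ using that $u$ is universal in $G-S_e$ — but that argument is not the one you gave.) Your other subcase is also not as immediate as stated: $u$ and $v$ are adjacent in $G-S_e$, so Theorem \ref{PS} does not apply to that graph directly; one has to apply it with parameter $0$ to $G-S_e-uv$, where the degree sum is at least $2(n-k-3)\geq (n-k)-1$ precisely because $n\geq k+5$, and this also presumes the theorem is read as valid for $k=0$ (existence of a perfect matching).

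More importantly, the whole detour through Lemma \ref{minimal} and the set $S_e$ is unnecessary, and avoiding it removes the problematic subcase altogether. The paper's proof is a one-line application of Theorem \ref{PS} at the level of the whole graph: if $d_G(u)=d_G(v)=n-2$ and $uv\in E(G)$, set $G'=G-uv$; then $u,v$ are nonadjacent in $G'$ with $d_{G'}(u)+d_{G'}(v)=2n-6\geq n+k-1$ (using $n\geq k+5$), and since $G'\cup\{uv\}=G$ is $k$-factor-critical, Theorem \ref{PS} says $G'$ is $k$-factor-critical, contradicting minimality. If you repair your argument, either adopt this direct route or supply the missing matching-swap (or the $k=0$ closure applied to $G-S_e-uv$) in place of the $(k+2)$-factor-criticality claim.
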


\begin{proof} If $G$ has three vertices of degree $n-2$,
then two of them must be adjacent. So it suffices to
show that any two vertices with degree $n-2$ are not adjacent.
Suppose to the contrary that $d_{G}(u)=d_{G}(v)=n-2$ for $uv\in E(G)$.
Let $G'=G-uv$. Since $n\geq$ $k+5$,
we have $d_{G'}(u)+d_{G'}(v)=2n-6\geq n+k-1$. By Theorem \ref{PS}, $G'$ is
also $k$-factor-critical, contradicting that $G$ is minimally $k$-factor-critical graph.
\end{proof}

\section{A simple proof of Corollary \ref{Special}}

In this section, we give a different and brief method to reprove Corollary \ref{Special}.
We divide our proof into the two cases $k=n-4$ and $n-6$ for $k\geq 2$.

\begin{lem}[\cite{FS}] \label{n4} A graph $G$ of order $n\geq 6$ is $(n-4)$-factor-critical
if and only if it is claw-free and $\delta(G)\geq n-3$.
\end{lem}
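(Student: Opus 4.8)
The statement to prove is: Lemma~\ref{n4} ---  a graph $G$ of order $n\geq 6$ is $(n-4)$-factor-critical if and only if it is claw-free and $\delta(G)\geq n-3$.

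The plan is to prove each direction using the Favaron--Yu characterization (Lemma~\ref{FY}), which says $G$ is $(n-4)$-factor-critical iff $C_o(G-B)\leq |B|-(n-4)$ for every $B\subseteq V(G)$ with $|B|\geq n-4$. Since $|B|\leq n$, the relevant bound $C_o(G-B)\leq |B|-n+4$ is nontrivial only for $|B|\in\{n-4,n-3,n-2,n-1,n\}$, giving respectively $C_o(G-B)=0$, $\leq 1$, $\leq 2$, $\leq 3$, $\leq 4$. Moreover, $n+k=2n-4$ is even, so by Lemma~\ref{conn} a $(n-4)$-factor-critical graph is $(n-4)$-connected; in particular $\delta(G)\geq n-4$, but a quick parity/Tutte argument pins it down further.

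For the forward direction, assume $G$ is $(n-4)$-factor-critical. First I would show $\delta(G)\geq n-3$: if some vertex $x$ had $d_G(x)\leq n-4$, take $B=\overline{N[x]}\cup\{x\}$ (the non-neighborhood of $x$ together with $x$), which has $|B|=n-d_G(x)\geq n-(n-4)=4\geq n-4$ when... actually I need $|B|\geq n-4$, so this works cleanly when $d_G(x)\leq 4$; for the general small-degree case I would instead delete $B = V(G)\setminus(N(x)\cup\{x\})$ appropriately and count that $x$ becomes an isolated (hence odd) component, forcing $|B|-n+4\geq 1$, i.e. $d_G(x)\geq n-3$ after checking the parity constraint that rules out equality cases producing too many odd components. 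Next, claw-freeness: if $\{x;a,b,c\}$ induces a claw, set $B=V(G)\setminus\{x,a,b,c\}$, so $|B|=n-4$ and Lemma~\ref{FY} demands $G-B=G[\{x,a,b,c\}]$ has a perfect matching; but $K_{1,3}$ has no perfect matching (and no edges among $a,b,c$ to help), contradiction.

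For the converse, assume $G$ is claw-free with $\delta(G)\geq n-3$, and verify the Favaron--Yu inequality for each $B$ with $|B|\in\{n-4,\dots,n\}$. The cases $|B|=n$ (trivially $\leq 4$) and $|B|=n-1$ (one vertex left, $C_o\leq 1\leq 3$) are immediate. For $|B|=n-2$: $G-B$ has two vertices and we need $C_o(G-B)\leq 2$, automatic. For $|B|=n-3$: $G-B$ has three vertices $\{u,v,w\}$; we need $C_o\leq 1$, i.e. $G[\{u,v,w\}]$ is not totally disconnected --- this follows because $\delta(G)\geq n-3$ means each of $u,v,w$ misses at most two vertices of $G$, hence has a neighbor among the other two. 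The crucial case is $|B|=n-4$: $G-B$ has four vertices and we must show it has a perfect matching, i.e. it is not one of the graphs on four vertices lacking a perfect matching ($\overline{K_4}$, $K_2\cup 2K_1$, $K_{1,3}$, $P_3\cup K_1$, or $K_3\cup K_1$ --- precisely those with an isolated vertex or an odd component). The degree condition $\delta(G)\geq n-3$ forces each of the four vertices to have a neighbor inside the 4-set (it misses $\leq 2$ vertices total, and $n-4\geq 2$ of the ``outside'' slots are in $B$), so there is no isolated vertex; then among four-vertex graphs with minimum degree $\geq 1$, the only ones without a perfect matching are $K_{1,3}$ and $K_3\cup K_1$ --- but $K_{1,3}$ is a claw (excluded) and $K_3\cup K_1$ has an isolated vertex (excluded). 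Hence $G-B$ has a perfect matching.

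The main obstacle is the $|B|=n-4$ case, specifically ruling out every four-vertex induced subgraph without a perfect matching using only the two hypotheses; the bookkeeping is to enumerate which four-vertex graphs lack a perfect matching, observe they all contain an isolated vertex or an induced claw, and then translate ``$\delta(G)\geq n-3$'' into ``no isolated vertex in any induced $4$-subgraph'' via the observation that a vertex of $G$ fails to be adjacent to at most two others, while three of its potential neighbors lie outside $B$ only if... one must track that $n-4$ deleted vertices leave at most $2-(\text{something})$ non-neighbors inside, which is exactly the content that needs care when $n$ is small (the hypothesis $n\geq 6$ is used here). Once this case is settled, the forward direction's degree bound is a short Tutte-style computation and claw-freeness is immediate.
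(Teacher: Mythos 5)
The paper never proves Lemma~\ref{n4}: it is quoted from Favaron and Shi \cite{FS}, so there is no in-paper argument to compare against. Your overall route is the natural one and its core is sound: using the Favaron--Yu criterion (Lemma~\ref{FY}) or, more simply, the definition, the essential content is that a $4$-vertex induced subgraph without an isolated vertex fails to have a perfect matching only if it is $K_{1,3}$, and your handling of the decisive $|B|=n-4$ case (each remaining vertex has at most two non-neighbours, hence a neighbour among the other three, so the only obstruction is an induced claw) as well as the claw-freeness part of the necessity direction is correct.

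There are, however, two concrete soft spots. First, in the necessity direction your degree bound is not carried out: both sets you name, $\overline{N[x]}\cup\{x\}$ and $V(G)\setminus N[x]$, have sizes $n-d_G(x)$ and $n-1-d_G(x)$, which for a vertex of degree at most $n-4$ are only guaranteed to be at least $4$ and $3$, i.e.\ below the threshold $|B|\ge n-4$ required in Lemma~\ref{FY} once $n\ge 9$. The repair is easy and needs no parity discussion: choose any $S$ with $|S|=n-4$, $N(x)\subseteq S$ and $x\notin S$ (possible exactly because $d_G(x)\le n-4$); then $x$ is isolated in the $4$-vertex graph $G-S$, contradicting the definition of $(n-4)$-factor-criticality. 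Simpler still, Lemma~\ref{conn} gives $(n-3)$-edge-connectivity and hence $\delta(G)\ge n-3$ at once, which is precisely how the paper argues at the start of its proof of Corollary~\ref{Special} for $k=n-4$. Second, in the sufficiency direction your $|B|=n-3$ case rests on a false inference: a vertex of degree at least $n-3$ misses at most two vertices, but those two misses could be exactly the other two vertices of the remaining triple, so the degree hypothesis alone does not exclude $G-B\cong\overline{K_3}$. The case is still fine, but it needs claw-freeness: if $u,v,w$ were pairwise nonadjacent, each would be adjacent to every vertex outside the triple, and any outside vertex (which exists since $n\ge 6$) would then be the centre of an induced claw. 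Alternatively, drop the cases $|B|>n-4$ altogether: by the definition of $k$-factor-criticality it suffices to show that $G-S$ has a perfect matching for every $S$ with $|S|=n-4$, which is exactly your (correct) main case.
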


\smallskip
\noindent{\bf Proof of Corollary \ref{Special} for $k=n-4$.}
By Lemma \ref{conn}, $\delta(G)\geq n-3$. To prove $\delta(G)=n-3$,
suppose to the contrary that $\delta(G)\geq n-2$.
Since $G$ is minimally $(n-4)$-factor-critical graph, for any $e=uv\in E(G)$,
$G-e$ is not $(n-4)$-factor-critical. Since $n=k+4\geq 6$ and $\delta(G-e)$$\geq n-3$,
by Lemma \ref{n4}, $G-e$ must contain a claw. Since $G$ is
$(n-4)$-factor-critical, $G$ is claw-free. Hence $u$ and $v$
must be two pendent vertices of the claw.
The third pendent vertex of the claw is not adjacent to $u$ and $v$.
So its degree is at most $n-3$, a contradiction.
~~~~~~~~~~~~~~~~~~~~~~~~~~~~~~~~~~~~~~~~~~~~~~~~~~~~~~~~~~~~~~~~~~
~~~~~~~~~~~~~~~~~~~~~~~~~~~~~~~~$\square$

\bigskip

\noindent{\bf Proof of Corollary \ref{Special} for $k=n-6$.}
Obviously, $\delta(G)\geq n-5$.
Suppose to the contrary that $\delta(G)\geq n-4$. That is, the non-neighborhood of any vertex in $G$ has at most three vertices.
Next we will obtain two claims.

{\textbf{Claim 1.}
For every $e=uv\in E(G)$, there exists $S_{e}\subseteq V(G)-\{u, v\}$
with $|S_{e}|=n-6$ such that $G-e-S_{e}$ is one of
Configurations $A1$, $A2$ and $A3$ as shown in Fig. 1.
(The vertices within a dotted box induce a connected subgraph and
the dotted edges indicate optional edges.)}

\begin{figure}[h]
\centering
\includegraphics[height=3.6cm,width=13.6cm]{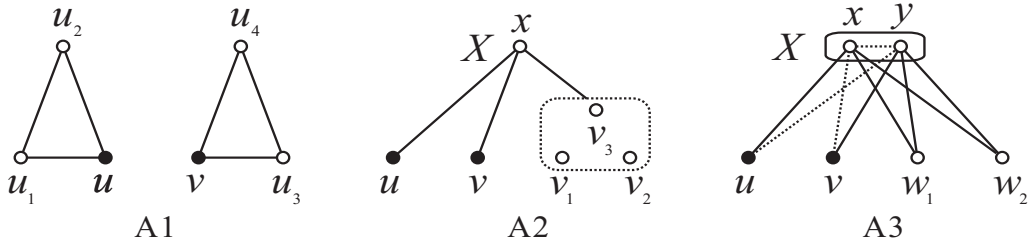}
\caption{\label{tu-1}The three configurations of $G'=G-e-S_{e}$.}
\end{figure}

Since $G$ is minimally $(n-6)$-factor-critical graph, by Lemma \ref{minimal},
for any given $e=uv\in E(G)$, there exists $S_{e}\subseteq V(G)-\{u, v\}$ with $|S_{e}|=n-6$
such that every perfect matching of $G-S_{e}$ contains $e$.
Then $G':=G-e-S_{e}$ has no perfect matching.
By Theorem \ref{tutte}, there exists $X\subseteq V(G')$ such that $C_{o}(G'-X)> |X|$.
By parity, $C_{o}(G'-X)\geq |X|+2$.
So $|X|+2\leq C_{o}(G'-X)\leq |V(G'-X)|=6-|X|$. Thus $|X|\leq 2$.
Since $G'+e$ has a perfect matching, $C_{o}(G'-X)=|X|+2$ and $u$ and $v$
belong separately to  distinct odd components of $G'-X$.
Since $\delta(G'+e)\geq 2$,  $G'$ has no isolated vertex. Now we discuss the following three cases
depending on $|X|$.

If $|X|=0$, then $G'$ must consist of two odd components isomorphic to $K_{3}$, and $e$ joins them. So $G'$ is $A1$.

If $|X|=1$, then $C_{o}(G'-X)=3$. If $G'-X$ has three trivial odd components, then
$G'+e$ has a pendent vertex, a contradiction. So $G'-X$ has exactly two trivial odd components and
one odd component with three vertices. Since $G'+e$ has no pendent vertex,
$e$ must join the two trivial odd components. So $G'$ is $A2$.

If $|X|=2$, then $C_{o}(G'-X)=4$. So $G'-X$ consists of exactly four trivial odd components,
two of which are joined by $e$. Let $X=\{x, y\}$.
If $x$ or $y\in \overline{N[u]}\cap \overline{N[v]}$, say $y\in \overline{N[u]}\cap \overline{N[v]}$,
then $ux, vx\in E(G')$ and $G'[w_{1}, w_{2}, y]$ is an odd component of $G'-\{x\}$.
Hence it is $A2$. Without loss of generality, assume that $ux, vy\in E(G')$.
Then $G'$ is $A3$. So Claim 1 holds.

Next we obtain some properties for the set of vertices of $G$ in both non-neighborhoods of end-vertices of an edge  from the three configurations.

{\textbf{Claim 2.} $(1)$ If $G-e-S_{e}$ is $A1$, then $|\overline{N[u]}\cap \overline{N[v]}|\leq 1$,
for $u_{1}, u_{2}\in N(u)$ but $u_{1}, u_{2}\notin N(v)$
and $u_{3}, u_{4}\in N(v)$ but $u_{3}, u_{4}\notin N(u)$;

\indent$(2)$ If $G-e-S_{e}$ is $A2$, then $|\overline{N[u]}\cap \overline{N[v]}|=3$ as
$\overline{N[u]}\cap \overline{N[v]}=\{v_{1}, v_{2}, v_{3}\}$;

\indent$(3)$ If $G-e-S_{e}$ is $A3$, then $|\overline{N[u]}\cap \overline{N[v]}|\geq 2$
as $ \overline{N[u]}\cap \overline{N[v]}$ contains a pair of non-adjacent vertices
 $\{w_{1}, w_{2}\}$.

By Claim 1, there are three cases to discuss, where contradictions always happen.

\smallskip
{\textbf{Case 1.} $G-e-S_{e}$ is $A1$.}

Consider edge $e'=uu_{1}$. By Claim 1, there exists $S_{e'}\subseteq V(G)-\{u, u_{1}\}$
with $|S_{e'}|=n-6$ such that $G-e'-S_{e'}$ is one of Configurations $A1$, $A2$ and $A3$.
Since $\overline{N[u_{1}]}=\{v, u_{3}, u_{4}\}$ and $uv\in E(G)$,
$\overline{N[u]}\cap \overline{N[u_{1}]}=\{u_{3}, u_{4}\}$.
By Claim 2 (1) and (2), $G-e'-S_{e'}$ is neither $A1$ nor $A2$. Since $u_{3}u_{4}\in E(G)$,
$\{u_{3}, u_{4}\}$ is not an independent set of $G$. So $G-e'-S_{e'}$ is not $A3$.
This is a contradiction to Claim 1.

\smallskip
{\textbf{Case 2.} $G-e-S_{e}$ is $A2$.}

Since $G-S_{e}$ has a perfect matching $M$,
without loss of generality, assume that $xv_{1}, v_{2}v_{3}\in M$.
Let $e'=ux\in E(G)$. Obviously, $\overline{N[u]}\cap \overline{N[x]}\subseteq \{v_{2}, v_{3}\}$.
By Claim 2 (2) and Case 1, $G-e'-S_{e'}$ is neither $A1$ nor $A2$ for any
$S_{e'}\subseteq V(G)-\{u, x\}$ with $|S_{e'}|=n-6$.
Because $v_{2}v_{3}\in E(G)$, $\{v_{2}, v_{3}\}$ is not an independent
set of $G$. Then $G-e'-S_{e'}$ is also not $A3$. This contradicts Claim 1.

\smallskip
{\textbf{Case 3.} $G-e-S_{e}$ is $A3$.}

Without loss of generality, assume that $ux, vy\in E(G)$.
Let $e'=ux\in E(G)$. Clearly, $w_{1}, w_{2}\in N(x)$ and $w_{1}, w_{2}\notin N(u)$.
Then $|\overline{N[u]}\cap \overline{N[x]}|\leq$ $1$.
By Claim 2 and Case 1, $G-e'-S_{e'}$ is not $A1$, $A2$ or $A3$ for any
$S_{e'}\subseteq V(G)-\{u, x\}$ with $|S_{e'}|=n-6$, which contradicts Claim 1.
~~~~~~~~~~~~~~~~~~~~~~~~~~~~~~~~~~~~~~~~~~~~~~~~~~~~~~~~~~~~~~~~
~~~~~~~~~~~~~~~~~~~~~~~~~~~~$\square$

\section{The minimum degree of minimally $(n-8)$-factor-critical graphs}

Going one step further, we confirm that Conjecture \ref{conj} is true for $k=n-8$.

\begin{thm}
If $G$ is minimally $(n-8)$-factor-critical graph of order $n\geq 10$,
then $\delta(G)=n-7$.
\end{thm}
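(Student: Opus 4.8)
The plan is to mimic the structure of the $k=n-6$ proof: argue by contradiction, assume $\delta(G)\ge n-6$ (so every vertex has a non-neighborhood of size at most five), and for each edge $e=uv$ produce a set $S_e$ of size $n-8$ such that $G':=G-e-S_e$ has no perfect matching. By Tutte's theorem there is a Tutte set $X\subseteq V(G')$ with $C_o(G'-X)\ge |X|+2$, and since $|V(G')|=8$ we get $|X|\le 3$. Using that $G'+e$ has a perfect matching, that $\delta(G'+e)\ge 2$ (so $G'$ has no isolated vertex), and that $u,v$ lie in distinct odd components of $G'-X$ with $e$ joining them, I would enumerate the possible isomorphism types of $G'$ for $|X|=0,1,2,3$. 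This gives a finite list of ``configurations'' — the analogue of Fig.~1 — which is larger than three but still manageable; the components can only be $K_1$, $K_3$, $K_5$ or a $5$-vertex connected graph, the total vertex count is $8$, and the no-pendent-vertex condition together with the parity constraint $C_o=|X|+2$ prunes many cases.

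Next, for each configuration I would read off, as in Claim 2, structural information about $\overline{N[u]}\cap\overline{N[v]}$: its size, whether it contains an independent pair, and which vertices of $G'$ lie in $N(u)\setminus N(v)$ or $N(v)\setminus N(u)$. The key qualitative facts I expect to need are: (i) in configurations where $e$ joins two trivial components, $\overline{N[u]}\cap\overline{N[v]}$ is forced to be large (it contains all the other components' vertices), (ii) in configurations where $u$ or $v$ sits in a $K_3$ or $K_5$ component, there are ``private neighbours'' of $u$ not adjacent to $v$ and vice versa, which caps $|\overline{N[u]}\cap\overline{N[x]}|$ for a suitable new edge $e'=ux$, and (iii) whenever two vertices lie in the same component of $G'-X$ or are both in $X$ adjacent to the same vertex, they may be adjacent in $G$, killing the ``independent pair'' option for $A3$-type configurations.

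Then comes the contradiction phase: for each configuration of $G-e-S_e$, I would pick a cleverly chosen new edge $e'$ incident with $u$ (typically $uu_i$ for a private neighbour $u_i$ of $u$, or $ux$ for $x\in X$ matched to a pendent-like vertex in a perfect matching $M$ of $G-S_e$), compute $\overline{N[u]}\cap\overline{N[e'\text{-partner}]}$, and show it is too small, or lacks an independent pair, to be compatible with any configuration on the list — contradicting the Claim~1 analogue applied to $e'$. The cases must be ordered so that once ``$G-e-S_e$ cannot be configuration $C_1$ for any $e$'' is established, that fact is available when ruling out $C_2$, and so on, exactly as Cases 1--3 cascade in the $n-6$ proof.

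The main obstacle I anticipate is the combinatorial explosion: with $|V(G')|=8$ there are genuinely more Tutte-set configurations than the three that appear for $n-6$ (in particular $|X|=3$ with four trivial components plus structure, and configurations involving a $K_5$), and for several of them the ``private neighbour'' trick may not immediately give a small enough intersection — I may need to chase a second auxiliary edge, or exploit a perfect matching of $G-S_e$ more carefully, or use $(n-8)$-factor-criticality of $G$ itself (via Lemma~\ref{FY}) to rule out a configuration directly rather than bootstrapping from Claim~1. Verifying that the finite list is exhaustive and that every entry on it admits a contradiction-producing edge $e'$ is where the real work lies; everything else is a faithful transcription of the $n-6$ argument.
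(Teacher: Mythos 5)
Your strategy is exactly the one the paper uses: assume $\delta(G)\ge n-6$, combine Lemma \ref{minimal} with Theorem \ref{tutte} to show that for every edge $e=uv$ the eight-vertex graph $G-e-S_{e}$ must fall into a finite list of configurations (the paper's $B1$--$B8$), record for each configuration the size of $\overline{N[u]}\cap \overline{N[v]}$ and whether it contains a suitable independent set, and then eliminate the configurations in a cascading order by applying the same claim to well-chosen auxiliary edges $e'$ (and, when needed, $e''$). But what you have submitted is a plan, not a proof: neither the configuration list nor a single one of the eliminations is actually carried out, and those eliminations are where the content of the theorem lies; you say so yourself (``where the real work lies''), which means the proposal reduces the theorem to precisely the case analysis that constitutes the paper's proof without performing it.

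Two of your concrete assertions would also derail the execution. First, your pruning principle that the components of $G'-X$ ``can only be $K_1$, $K_3$, $K_5$ or a $5$-vertex connected graph'' is false: even components occur (the paper's $B2$ has a two-vertex even component, since $|X|=1$ with odd components of sizes $1+1+3$ leaves two vertices over), and three-vertex odd components need not be complete; an enumeration built on your principle would not be exhaustive, and exhaustiveness is what the entire contradiction machine rests on. Second, your expectation that the non-neighborhood filter plus one private-neighbour edge suffices for each configuration is too optimistic: in the paper's treatment of $B1$, choosing $e'=uu_{1}$ gives $\overline{N[u]}\cap\overline{N[u_{1}]}=\{u_{3},u_{4},u_{5},u_{6}\}$, which rules out only $B1,B2,B3,B5,B6,B8$; configurations $B4$ and $B7$ survive the filter and are excluded only by a finer analysis of how a four-vertex set spanning two independent edges can sit inside them. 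Likewise $B6$ and $B8$ require a second auxiliary edge $e''$ and a degree count forcing some vertex $w$ to satisfy $d_{G}(w)\le n-7$ or $n-8$, contradicting $\delta(G)\ge n-6$ --- steps you only gesture at. So the approach is right, but there is a genuine gap: the exhaustive configuration list and the configuration-by-configuration contradictions are missing, and your stated shortcuts for producing them do not work as described.
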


\begin{proof}
By Lemma \ref{conn}, $\delta(G)\geq n-7$.
Suppose to the contrary that $\delta(G)\geq n-6$.

{\textbf{Claim 1.}
For every $e=uv\in E(G)$, there exists $S_{e}\subseteq V(G)-\{u, v\}$
with $|S_{e}|=n-8$ such that $G-e-S_{e}$ is one of Configurations $B1$ to $B8$
as shown in Fig. 2. (The vertices within a dotted box induce a connected subgraph.)}

\begin{figure}[h]
\centering
\includegraphics[height=8cm,width=15.2cm]{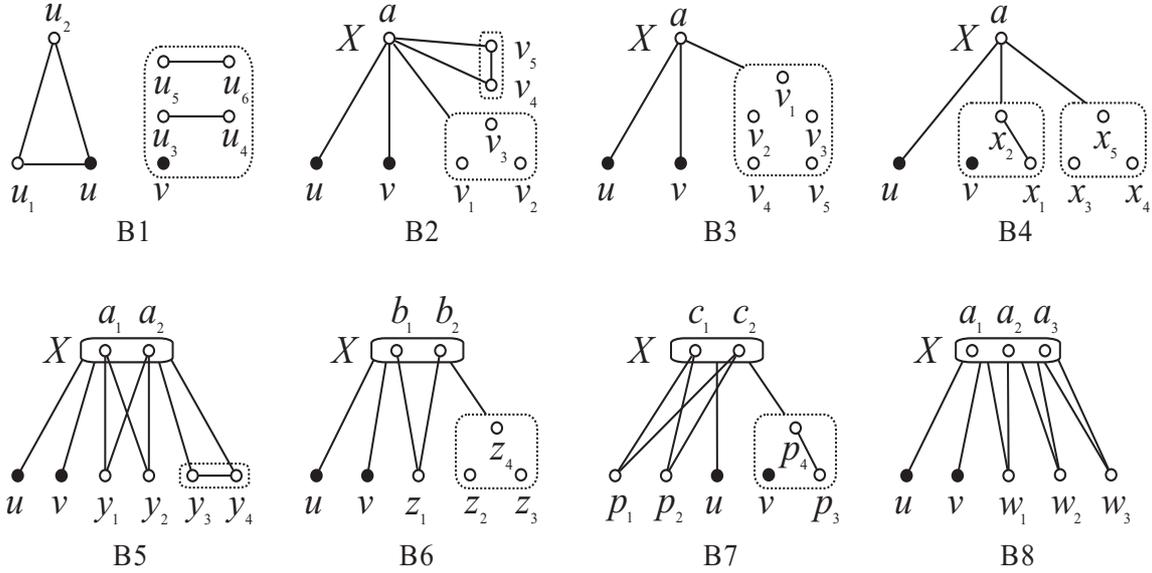}
\caption{\label{tu-2}The eight configurations of $G'=G-e-S_{e}$.}
\end{figure}

Since $G$ is minimally $(n-8)$-factor-critical graph, by Lemma \ref{minimal},
for any $e=uv\in E(G)$, there exists $S_{e}\subseteq V(G)-\{u, v\}$ with $|S_{e}|=n-8$
such that every perfect matching of $G-S_{e}$ contains $e$.
Then $G-e-S_{e}$ has no perfect matching. Let $G'=G-e-S_{e}$.
By Theorem \ref{tutte}, there exists $X\subseteq V(G')$ such that
$C_{o}(G'-X)> |X|$. By parity, $C_{o}(G'-X)\geq$ $|X|+2$.
So $|X|+2\leq$ $C_{o}(G'-X)\leq$ $|V(G'-X)|$$=8-|X|$. Thus $|X|\leq 3$.
Since $G'+e$ has a $1$-factor, $C_{o}(G'-X)=|X|+2$ and $u$ and $v$ belong respectively to
two distinct odd components of $G'-X$.
Moreover, $\delta(G-S_{e})\geq 2$. Then $G'+e=G-S_{e}$ has no pendent vertex and
$G'$ has no isolated vertex.

If $|X|=0$, then $G'$ has exactly two odd components,
one of which is $K_{3}$ and the other has five vertices.
Since $G'+e$ has a $1$-factor, $e$ joins the two odd components, and we may assume that
$u_{3}u_{4}, u_{5}u_{6}$ are two independent edges. So $G'$ is $B1$.

If $|X|=1$, then $C_{o}(G'-X)=3$. Let $X=\{a\}$. $G'-X$ has at most two trivial odd components
which are joined by $e$. Otherwise, $G'+e$ has a pendent vertex, a contradiction.
The other odd component has three or five vertices. So $G'$ is $B2$ or $B3$.
Specially, if $G'-X$ has exactly one trivial odd component, then the other two nontrivial
odd components are both with three vertices. Since $G'+e$ has no pendent vertex,
$e$ joins the trivial odd component and a nontrivial odd component with three vertices.
Besides, there must exist an edge joining $a$ and the
nontrivial odd component, otherwise, it is $B1$. Then $G'$ is $B4$.

If $|X|=2$, then $C_{o}(G'-X)=4$. Hence $G'-X$ has either four trivial odd components or
three trivial odd components and one nontrivial odd component with three vertices.
Since $G'+e$ has a $1$-factor, $e$ joins two of the four odd components of $G'-X$.
So $G'$ is $B5$, $B6$ or $B7$.

If $|X|=3$, then $C_{o}(G'-X)=5$. Thus $G'-X$ consists of exactly five trivial odd components,
two of which are joined by $e$. So $G'$ is $B8$.

\smallskip
For every $x\in V(G)$, there are at most five vertices of $G$ in $\overline{N[x]}$.
Then we can obtain the following claim by observing the eight configurations.

{\bf Claim 2.} $(1)$ If $G-e-S_{e}$ is $B1$, then $|\overline{N[u]}\cap \overline{N[v]}|\leq 3$.
Since $u_{1}, u_{2}\in N(u)$ but $u_{1}, u_{2}\notin N(v)$, $\overline{N[v]}$ has
at most three elements in $\overline{N[u]}$;

\indent$(2)$ If $G-e-S_{e}$ is $B2$ or $B3$,
then $|\overline{N[u]}\cap \overline{N[v]}|=5$;

\indent$(3)$ If $G-e-S_{e}$ is $B4$, then $3\leq |\overline{N[u]}\cap \overline{N[v]}|\leq 4$
as $\overline{N[u]}=\{x_{1}, x_{2}, x_{3}, x_{4}, x_{5}\}$
and $\{x_{3}, x_{4}, x_{5}\}$$\subseteq \overline{N[v]}$ but $x_{1}$ or $x_{2}\in N(v)$;

\indent$(4)$ If $G-e-S_{e}$ is $B5$ or $B6$, then $|\overline{N[u]}\cap \overline{N[v]}|\geq 4$;

\indent$(5)$ If $G-e-S_{e}$ is $B7$, then $2\leq |\overline{N[u]}\cap \overline{N[v]}|\leq 4$
as $\{p_{1}, p_{2}, p_{3}, p_{4}\}\subseteq$$\overline{N[u]}$
and $\{p_{1}, p_{2}\}$$\subseteq \overline{N[v]}$ but $p_{3}$ or $p_{4}\in N(v)$;

\indent$(6)$ If $G-e-S_{e}$ is $B8$, then $|\overline{N[u]}\cap \overline{N[v]}|\geq 3$
as  $\subseteq \overline{N[u]}\cap \overline{N[v]}$ contains
  an independent set $\{w_{1}, w_{2}, w_{3}\}$.

\smallskip
By Claim 1, there are eight cases to distinguish.

\smallskip
{\textbf{Case 1.} $G-e-S_{e}$ is $B1$.}

Since $G-S_{e}$ has a perfect matching $M$, $uv, u_{1}u_{2}\in M$. So,
without loss of generality, assume that $u_{3}u_{4}, u_{5}u_{6}\in M$.

Consider edge $e'=uu_{1}$. Clearly, $\overline{N[u_{1}]}=\{v, u_{3}, u_{4}, u_{5}, u_{6}\}$
and $\overline{N[u]}\cap \overline{N[u_{1}]}=$$\{u_{3}, u_{4},$ $u_{5}, u_{6}\}$.
By Claim 1, there exists $S_{e'}\subseteq V(G)-\{u, u_{1}\}$ with $|S_{e'}|=n-8$
such that $G-e'-S_{e'}$ is one of Configurations $B1$ to $B8$.
By Claim 2 (1) and (2), $G-e'-S_{e'}$ may not be $B1$, $B2$ or $B3$.
Furthermore, since $G[ \overline{N[u]}\cap \overline{N[u_{1}]} ]$ contains two independent edges,
by Claim 2 (4) and (6), $G-e'-S_{e'}$ can not be $B5$, $B6$ or $B8$.
Then $G-e'-S_{e'}$ would be $B4$ or $B7$.

Suppose that $G-e'-S_{e'}$ is $B4$.
Since $G[\overline{N[u_{1}]}]$ is a connected subgraph of $G$,
$u$ (resp. $u_{1}$) belongs to the trivial (resp. nontrivial) odd component of $B4-\{a\}$.
The odd component containing $u_{1}$ must be $K_{3}$. Otherwise, there is a vertex in
$\overline{N[u]}\cap \overline{N[u_{1}]}$ which is not adjacent to the other
three vertices, contradicting that $u_{3}u_{4}, u_{5}u_{6}$ are two independent edges.
But then $|\overline{N[u]}\cap \overline{N[u_{1}]}|= 3$, a contradiction.

So $G-e'-S_{e'}$ is $B7$. If $u_{1}$ is the trivial odd component of $B7-\{c_{1}, c_{2}\}$,
then $\{p_{1}, p_{2}\}=\{u_{3}, u_{5}\}$ or $\{p_{1}, p_{2}\}=\{u_{4}, u_{6}\}$,
say $\{p_{1}, p_{2}\}=\{u_{3}, u_{5}\}$. Hence $u_{4}$ or $u_{6}\in \{p_{3}, p_{4}\}$.
But $u_{3}u_{4}, u_{5}u_{6}\in E(G)$, a contradiction.
Then $u$ (resp. $u_{1}$) belongs to the trivial (resp. nontrivial)
odd component of $B7-\{c_{1}, c_{2}\}$. The odd component containing $u_{1}$
must be $K_{3}$. Otherwise, say $u_{1}p_{4}\notin E(G)$, so $p_{4}\in \{u_{4}, u_{6}\}$,
contradicting that $p_{1}p_{4}, p_{2}p_{4}\notin E(G)$.
But then $|\overline{N[u]}\cap \overline{N[u_{1}]}|\leq 3$, a contradiction.

\smallskip
{\textbf{Case 2.} $G-e-S_{e}$ is $B7$.}

We may assume that $uc_{1}, vp_{3}\in E(G)$. Since $G-S$ has a perfect matching, $p_{3}p_{4}\in E(G)$.
Let $e'=c_{1}p_{1}\in E(G)$. Obviously, $\overline{N[p_{1}]}=$$\{u, v, p_{2}, p_{3}, p_{4}\}$.
Then $\overline{N[c_{1}]}\cap \overline{N[p_{1}]}\subseteq \{v, p_{3}, p_{4}\}$ (see Fig. 3 (1)).
By Claim 1, there exists $S_{e'}\subseteq V(G)-\{c_{1}, p_{1}\}$
with $|S_{e'}|=n-8$ such that $G-e'-S_{e'}$ is one of Configurations $B1$ to $B8$.
Since $\{v, p_{3}, p_{4}\}$ is not an independent set of $G$, $G-e'-S_{e'}$ is not $B8$.
By Claim 2 (3), (5) and Case 1, $G-e'-S_{e'}$ would be $B4$ or $B7$.

If $G-e'-S_{e'}$ is $B4$, then $\overline{N[c_{1}]}\cap \overline{N[p_{1}]}=\{v, p_{3}, p_{4}\}=$
$\{x_{3}, x_{4}, x_{5}\}$. Hence $c_{1}$ (resp. $p_{1}$) belongs to the trivial (resp. nontrivial)
odd component of $B4-\{a\}$. Otherwise, $\overline{N[p_{1}]}=$ $\{x_{1}, x_{2}, x_{3}, x_{4}, x_{5}\}$,
so $u\in \{x_{1}, x_{2}\}$, but $uv\in E(G)$, a contradiction.
Moreover, the odd component of $B4-\{a\}$
containing $p_{1}$ must be $K_{3}$ as $|\overline{N[c_{1}]}\cap \overline{N[p_{1}]}|=3$.
Then $x_{1}$, $x_{2}\notin \{u, p_{2}\}$.
Since $p_{3}\in \{x_{3}, x_{4}, x_{5}\}$, $\{x_{1}, x_{2}\}\subseteq \overline{N[p_{3}]}$.
Then $\{c_{1}, u, p_{1}, p_{2}, x_{1}, x_{2}\}\subseteq \overline{N[p_{3}]}$.
Hence $d_{G}(p_{3})\leq n-7$, a contradiction (see Fig. 3 (2)).

\begin{figure}[h]
\centering
\includegraphics[height=4cm,width=14cm]{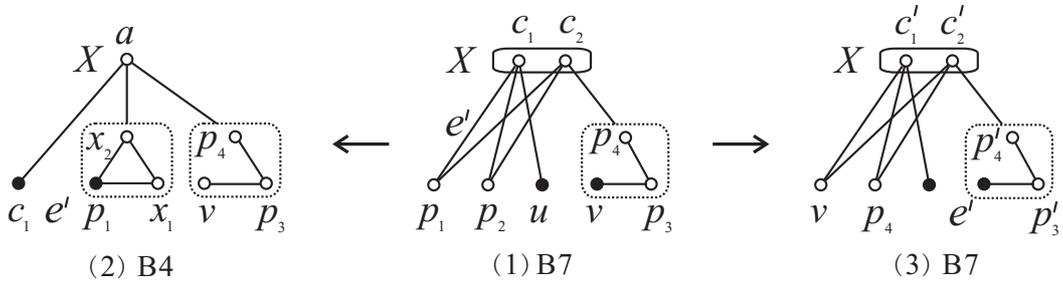}
\caption{\label{tu-3}$G-e'-S_{e'}$ is $B4$ and $G-e'-S_{e'}$ is $B7$.}
\end{figure}

So $G-e'-S_{e'}$ is $B7$, its vertices are relabelled by
$c_{1}', c_{2}', p_{1}', p_{2}', p_{3}', p_{4}', u', v'$,
Since $\{p_{1}', p_{2}'\}\subseteq$
$\overline{N[c_{1}]}\cap \overline{N[p_{1}]}\subseteq \{v, p_{3}, p_{4}\}$, $vp_{4}\notin E(G)$.
We may assume $p_{1}'=v$, $p_{2}'=p_{4}$. It is easy to see that
$\overline{N[p_{4}]}=$$\{u, v, c_{1}, p_{1}, p_{2}\}$. On the other hand,
$\overline{N[p_{4}]}=$$\{v, c_{1}, p_{1}, p_{3}', p_{4}'\}$. So $\{u, p_{2}\}=\{p_{3}', p_{4}'\}$.
But $p_{3}'p_{4}'\in E(G)$, $up_{2}\notin E(G)$, a contradiction (see Fig. 3 (3)).

\smallskip
{\textbf{Case 3.} $G-e-S_{e}$ is $B5$.}

Clearly, $\overline{N[y_{1}]}=$$\{u, v, y_{2}, y_{3}, y_{4}\}$.
Without loss of generality, assume that $ua_{1}\in E(G)$. Let $e'=y_{1}a_{1}\in E(G)$.
Then $\overline{N[a_{1}]}\cap \overline{N[y_{1}]}\subseteq \{v, y_{3}, y_{4}\}$.
By Claim 1, there exists $S_{e'}\subseteq V(G)-\{y_{1}, a_{1}\}$ with $|S_{e'}|=n-8$
such that $G-e'-S_{e'}$ is one of Configurations $B1$ to $B8$.
Since $y_{3}y_{4}\in E(G)$, $\{v, y_{3}, y_{4}\}$ is not an independent set of $G$.
So $G-e'-S_{e'}$ is not $B8$. By Claim 2 (3), Case 1 and Case 2, $G-e'-S_{e'}$ would be $B4$.

If $G-e'-S_{e'}$ is $B4$, then $\overline{N[a_{1}]}\cap \overline{N[y_{1}]}=$$\{v, y_{3}, y_{4}\}$,
which induces a connected subgraph of $G$. But $vy_{3}, vy_{4}\notin E(G)$, a contradiction.

\smallskip
{\textbf{Case 4.} $G-e-S_{e}$ is $B4$.}

Since $G-S_{e}$ has a perfect matching $M$, without loss of generality,
assume that $x_{1}x_{2}, ax_{3},x_{4}x_{5}\in M$.
Let $e'=ua\in E(G)$. Obviously, $\overline{N[u]}=\{x_{1}, x_{2}, x_{3}, x_{4}, x_{5}\}$.
Hence $\overline{N[a]}\cap \overline{N[u]}$$\subseteq \{x_{1}, x_{2}, x_{4}, x_{5}\}$.
By Claim 1, there exists $S_{e'}\subseteq V(G)-\{u, a\}$ with $|S_{e'}|=n-8$
such that $G-e'-S_{e'}$ is one of Configurations $B1$ to $B8$.
By Claim 2, Cases 1, 2 and 3, $G-e'-S_{e'}$ would be $B4$, $B6$ or $B8$.

Since $x_{1}x_{2}, x_{4}x_{5}\in E(G)$, $\{x_{1}, x_{2}, x_{4},$ $x_{5}\}$ has no subset,
which is an independent set with size three. So $G-e'-S_{e'}$ is not $B8$.
Because there are not edges joining $\{x_{1}, x_{2}\}$ and $\{x_{4}, x_{5}\}$,
for any $T\subseteq \{x_{1}, x_{2}, x_{4}, x_{5}\}$ with $|T|=3$,
$G[T]$ does not induce a component of $B4-\{a\}$ or $B6-\{b_{1}, b_{2}\}$.
So $G-e'-S_{e'}$ can not be $B4$ or $B6$. This is a contradiction to Claim 1.

\smallskip
{\textbf{Case 5.} $G-e-S_{e}$ is $B6$.}

Without loss of generality, we may assume that $b_{1}z_{2}\in E(G)$.
Let $e'=b_{1}z_{1}$. Clearly, $\overline{N[z_{1}]}=$$\{u, v, z_{2}, z_{3}, z_{4}\}$.
By Claim 1, there exists $S_{e'}\subseteq V(G)-\{b_{1}, z_{1}\}$
with $|S_{e'}|=n-8$ such that $G-e'-S_{e'}$ is one of Configurations $B1$ to $B8$.
Moreover, $b_{1}$, $b_{2}\notin \overline{N[u]}\cap \overline{N[v]}$,
otherwise, it is $B2$ or $B3$. Assume that $ub_{1}, vb_{2}\in E(G)$.
So $\overline{N[b_{1}]}\cap \overline{N[z_{1}]}\subseteq \{v, z_{3}, z_{4}\}$.
By Claim 2, Cases 1, 2 and 4, $G-e'-S_{e'}$ would be $B8$.

If $G-e'-S_{e'}$ is $B8$, then
$\overline{N[b_{1}]}\cap \overline{N[z_{1}]}=\{v, z_{3}, z_{4}\}$
and $\{v, z_{3}, z_{4}\}$ is an independent set of $G$.
So $z_{3}z_{4}\notin E(G)$. Thus $z_{2}z_{3}, z_{2}z_{4}\in E(G)$.
Since $\delta(G-S_{e})\geq 2$, $b_{2}z_{3}, b_{2}z_{4}\in E(G)$.
Now consider edge $e''=z_{2}z_{3}$.
We have $\overline{N[z_{3}]}=\{u, v, b_{1}, z_{1}, z_{4}\}$.
Since $b_{1}z_{2}, z_{2}z_{4}\in E(G)$, $\overline{N[z_{2}]}\cap \overline{N[z_{3}]}=\{u, v, z_{1}\}$.
By Claim 2, Cases 1, 2, 4 and $uv\in E(G)$, $G-e''-S_{e''}$
is not one of Configurations $B1$ to $B8$
for any $S_{e''}\subseteq V(G)-\{z_{2}, z_{3}\}$ with $|S_{e''}|=n-8$, a contradiction.

\smallskip
{\textbf{Case 6.} $G-e-S_{e}$ is $B2$ or $B3$.}

Since $G-S_{e}$ has a perfect matching $M$, without loss of generality, assume
that $av_{1}, v_{2}v_{3}, v_{4}v_{5}\in M$. Now consider edge $e'=ua$.
Then $\overline{N[u]}\cap \overline{N[a]}\subseteq \{v_{2}, v_{3}, v_{4}, v_{5}\}$.
By Claim 2 and Cases 1 to 5,
$G-e'-S_{e'}$ would be $B8$ for some $S_{e'}\subseteq V(G)-\{u, a\}$ with $|S_{e'}|=n-8$
only when $G-e-S_{e}$ is $B3$.
Since $v_{2}v_{3}, v_{4}v_{5}\in E(G)$, for any $T\subseteq$$\{v_{2}, v_{3}, v_{4}, v_{5}\}$
with $|T|=3$, $T$ is not an independent set of $G$. So $G-e'-S_{e'}$ is not $B8$.
This is a contradiction to Claim 1.

\smallskip
{\textbf{Case 7.} $G-e-S_{e}$ is $B8$.}

Since $\delta(G-S_{e})\geq 2$, without loss of generality,
assume that $w_{1}a_{1}, w_{1}a_{2}\in E(G)$.
Let $e'=w_{1}a_{1}\in E(G)$ and $S_{e'}\subseteq V(G)-\{w_{1}, a_{1}\}$
with $|S_{e'}|=n-8$ satisfying Claim 1.
Then $\{u, v, w_{2}, w_{3}\}\subseteq \overline{N[w_{1}]}$.
By Claim 2 and Cases 1 to 6, $G-e'-S_{e'}$ would be $B8$.
Then it suffices to show that there is an independent set $S_{0}'$ with size three
in $G-S_{e'}$ and every vertex in $S_{0}'$ is not adjacent to $w_{1}$ and $a_{1}$.
Since $S_{0}'\subseteq \overline{N[w_{1}]}$,
$w_{2}$ or $w_{3}\in S_{0}'$, say $w_{2}\in S_{0}'$.
Then $w_{2}a_{2}, w_{2}a_{3}\in E(G)$ as $d_{G-S_{e}}(w_{2})\geq 2$.

\smallskip
{\textbf{Subcase 7.1.} $w_{1}a_{3}\notin E(G)$.}

Clearly, $\overline{N[w_{1}]}=\{u, v, w_{2}, w_{3}, a_{3}\}$.
So $a_{3}\notin S_{0}'$ and $w_{3}\in S_{0}'$. Then $w_{3}a_{2}, w_{3}a_{3}\in E(G)$.
Thus $S_{0}'$ is either $\{u, w_{2}, w_{3}\}$ or $\{v, w_{2}, w_{3}\}$,
say $\{u, w_{2}, w_{3}\}$. Now consider edge $e''=w_{1}a_{2}$.
Then $\overline{N[a_{2}]}\cap \overline{N[w_{1}]}\subseteq \{u, v, a_{3}\}$.
By Claim 2, Cases 1, 2, 4 and $uv\in E(G)$, $G-e''-S_{e''}$ is not one of Configurations $B1$ to $B8$
for any $S_{e''}\subseteq V(G)-\{w_{1}, a_{2}\}$ with $|S_{e''}|=n-8$, which contradicts Claim 1.

\smallskip
{\textbf{Subcase 7.2.} $w_{1}a_{3}\in E(G)$.}

Let $e''=w_{1}a_{2}\in E(G)$ and $S_{e''}\subseteq V(G)-\{w_{1}, a_{2}\}$
with $|S_{e''}|=n-8$ satisfying Claim 1.
We denote the independent set with size three in $G-S_{e''}$
by $S_{0}''$ and every vertex in $S_{0}''$ is not adjacent to $w_{1}$ and $a_{2}$.
Then $w_{3}\in S_{0}''$. So $w_{3}a_{1}, w_{3}a_{3}\in E(G)$.
Hence $w_{3}\notin S_{0}'$. Thus $u$ or $v\in S_{0}'$, say $u\in S_{0}'$.
Then $S_{0}'=\{u, w_{2}, w\}$, where
$w\in \overline{N[w_{1}]}$$\cap$$\overline{N[a_{1}]}$.
Since $w_{2}\notin S_{0}''$, $u$ or $v\in S_{0}''$.

If $u\in S_{0}''$, then $S_{0}''=\{u, w_{3}, w\}$, where
$w\in \overline{N[w_{1}]}$$\cap$$\overline{N[a_{2}]}$.
Thus $\{u, w_{1}, w_{2}, w_{3}, a_{1}, a_{2}\}$ $\subseteq$ $\overline{N[w]}$.
So $d_{G}(w)\leq n-7$, a contradiction.

If $v\in S_{0}''$, then $S_{0}''=\{v, w_{3}, w\}$, where
$w\in \overline{N[w_{1}]}$$\cap$$\overline{N[a_{2}]}$.
Thus $\{u, v, w_{1}, w_{2}, w_{3}, a_{1}, a_{2}\}$ $\subseteq$ $\overline{N[w]}$.
So $d_{G}(w)\leq n-8$, a contradiction.

\smallskip
Combining Cases 1 to 7, we complete the proof.
\end{proof}

\section{Some properties of minimally $(n-6)$-factor-critical graphs}

In this section, we obtain that
every minimally $(n-6)$-factor-critical graph of order $n$ has
at most $n-\Delta(G)$ vertices with the maximum degree $\Delta(G)$
for $n-4\leq \Delta(G)\leq n-1$.

By Lemma \ref{n6}, for any minimally $(n-6)$-factor-critical graph of order $n$,
$G$ has only one vertex of degree $n-1$ and
$n-1$ vertices of degree $n-5$ when $\Delta(G)=n-1$. So we consider
the cases of $n-4\leq \Delta(G)\leq n-2$.

\begin{lem}\label{four}
Let $G$ be minimally $(n-6)$-factor-critical graph of order $n\geq 8$.
For every $e=uv\in E(G)$ with $d_{G}(u)\geq n-4$, $d_{G}(v)\geq n-4$, there exists
$S_{e}\subseteq V(G)-\{u, v\}$ with $|S_{e}|=$$n-6$ such that $G-e-S_{e}$ is one of
Configurations $C1$, $C2$, $C3$ and $C4$ as shown in Fig. 4.
(The dotted edge indicates an optional edge.)

\begin{figure}[h]
\centering
\includegraphics[height=4cm,width=15cm]{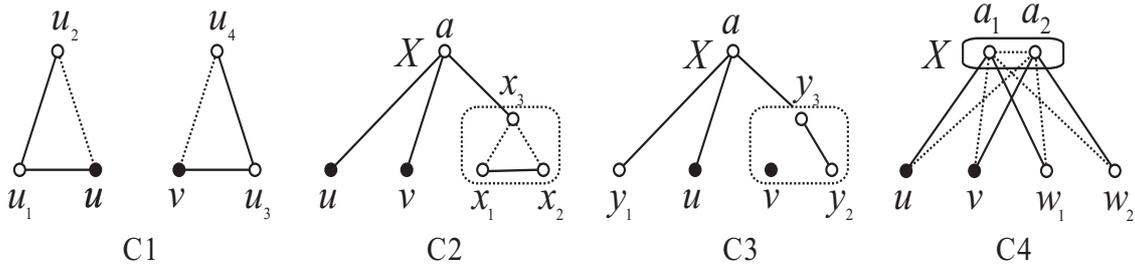}
\caption{\label{tu-4}The four configurations of $G'=G-e-S_{e}$.}
\end{figure}
\end{lem}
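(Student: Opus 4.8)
The plan is to mimic the structure of Claim 1 in the proof of Corollary \ref{Special} for $k=n-6$ and in the proof of the $(n-8)$ theorem, but now exploiting the extra hypothesis $d_G(u), d_G(v) \geq n-4$ to cut down the list of possible configurations. First I would invoke Lemma \ref{minimal}: since $G$ is minimally $(n-6)$-factor-critical, for the given edge $e=uv$ there exists $S_e \subseteq V(G)\setminus\{u,v\}$ with $|S_e| = n-6$ such that every perfect matching of $G-S_e$ contains $e$. Hence $G' := G - e - S_e$ has no perfect matching, while $G'+e = G-S_e$ does. Apply Tutte's theorem (Theorem \ref{tutte}): there is $X \subseteq V(G')$ with $C_o(G'-X) > |X|$, and by the parity argument $C_o(G'-X) \geq |X|+2$. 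Since $|V(G')| = 6$, we get $|X|+2 \leq C_o(G'-X) \leq 6 - |X|$, so $|X| \leq 2$. Moreover $G'+e$ has a perfect matching, which forces $C_o(G'-X) = |X|+2$ exactly, with $u$ and $v$ lying in two distinct odd components of $G'-X$. Also $\delta(G-S_e) \geq \delta(G) - (n-6) \geq (n-5)-(n-6) = 1$; but actually since $G'+e$ has a perfect matching and is $k$-factor-critical-after-restriction we have no pendant vertices either—wait, here I must be careful: in the $(n-6)$ case the bound $\delta(G-S_e)\geq 2$ came from $(k+1)$-edge-connectivity with $k=n-6$; here I only need $G-S_e$ to have a perfect matching, so at minimum $G'$ has no isolated vertex.

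Next I would enumerate the cases $|X| = 0, 1, 2$ exactly as in the $(n-6)$ proof, obtaining a priori the three configurations $A1$, $A2$, $A3$ (two $K_3$'s joined by $e$; $\{a\}$ plus two trivial components plus a $K_3$; four trivial components with $e$ joining two of them and $X=\{x,y\}$ distributed so $ux, vy \in E(G')$). The new ingredient is the degree hypothesis. In configuration $A2$, $u$ and $v$ are each adjacent (in $G'$, hence in $G$) only to the single vertex $a$ among $V(G')\setminus\{u,v\}$... no—rather, $\overline{N[u]} \cap \overline{N[v]}$ has size $3$ (the three vertices of the $K_3$-component plus/minus the right count), and I would check that this is compatible with $d_G(u), d_G(v)\geq n-4$ only in restricted ways: $|\overline{N[u]}| = n-1-d_G(u) \leq 3$ and likewise for $v$. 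So in $A2$, where the $K_3$-component sits in both non-neighborhoods, we must have $|\overline{N[u]}| = |\overline{N[v]}| = 3$, i.e. $d_G(u) = d_G(v) = n-4$, and additionally the optional edges are forced. This is how $A1, A2, A3$ specialize to $C1, C2, C3, C4$: the degree constraint pins down which of the "optional" edges inside $G'$ must actually be present, and in the $\Delta = n-4$ versus $\Delta < n-4$ subcases it forces $u,v$ into prescribed positions. I would go through $|X|=0$ (giving $C1$, the two $K_3$'s, with the dotted edge being the possible extra chord), $|X|=1$ (giving $C2$ and $C3$ according to whether certain cross edges between the trivial components and the $K_3$ are present—forced by $d(u)\ge n-4$), and $|X|=2$ (giving $C4$).

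The main obstacle I expect is the bookkeeping in the $|X|=1$ and $|X|=2$ cases: showing that the degree lower bounds $d_G(u),d_G(v)\ge n-4$ together with "$G'+e$ has a perfect matching and no structure forcing a low-degree vertex" really do collapse the optional edges to exactly the patterns drawn in $C1$–$C4$, and that no further configuration survives. Concretely, I must verify that every vertex $w$ of $G'$ other than $u,v$ satisfies the constraint coming from $\overline{N[w]}$ having the right size, and rule out configurations where some $w \in V(G')$ would be forced to have too large a non-neighborhood (equivalently too small a degree) in $G$. A secondary subtlety is the "without loss of generality" reductions for which vertex of $X$ lies in $\overline{N[u]}\cap\overline{N[v]}$, exactly as in the $A2$/$A3$ analysis of the $(n-6)$ proof; I would reuse that argument verbatim. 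Once the configuration list is established, the lemma is proved; the subsequent exploitation of $C1$–$C4$ to bound the number of maximum-degree vertices would be the job of the theorem that follows, not of this lemma.
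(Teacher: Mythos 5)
Your setup follows the paper exactly up to a point: Lemma \ref{minimal} gives the set $S_e$, Tutte's theorem plus the parity count gives $|X|\leq 2$ and $C_o(G'-X)=|X|+2$ with $u,v$ in distinct odd components, and you correctly sense that in this lemma only ``$G'$ has no isolated vertex'' is available, not ``$G'+e$ has no pendent vertex''. But the central step of your plan --- running the enumeration ``exactly as in the $(n-6)$ proof'' to obtain $A1$, $A2$, $A3$ and then \emph{specializing} these to $C1$--$C4$ via the hypothesis $d_G(u),d_G(v)\geq n-4$ --- is a genuine gap, and it contradicts the very caveat you raised. The derivation of $A1$--$A3$ in Section 3 runs under the reductio hypothesis $\delta(G)\geq n-4$ for \emph{all} vertices, which yields $\delta(G-S_e)\geq 2$; that is precisely what excludes, for $|X|=1$, the case of three trivial odd components plus an even $2$-vertex component and the case where $e$ joins a trivial component to the $3$-vertex component, and what forces the $3$-vertex components to be $K_3$. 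In the present lemma $\delta(G)=n-5$ (Corollary \ref{Special}) and the degree hypothesis constrains only $u$ and $v$; every other vertex of $G'$ may have degree $1$ in $G-S_e$. Hence the correct conclusion is a \emph{larger} family of structures, not a refinement of $A1$--$A3$: configuration $C3$, where $e$ joins a trivial component to the $3$-vertex component of $G'-\{a\}$ and a further vertex $y_1$ is attached only to $a$ (forcing $d_G(y_1)=n-5$, cf.\ Proposition \ref{prop}(3)), has no counterpart among $A1$--$A3$ and cannot be produced by toggling optional edges in them; likewise the $3$-vertex pieces in $C1$ and $C3$ need only be connected, not complete, which is what the dotted edges in Fig.~4 encode.

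What the proof actually requires is to redo the case analysis $|X|=0,1,2$ from scratch using only the no-isolated-vertex property: for $|X|=0$ the split $1+5$ is impossible and $3+3$ gives $C1$; for $|X|=1$ both placements of $e$ (joining two trivial components, or a trivial component and the $3$-vertex component) must be kept, giving $C2$ and $C3$; for $|X|=2$ one argues, as you do correctly anticipate, that $a_1,a_2\notin \overline{N[u]}\cap\overline{N[v]}$, since otherwise the situation reduces to $C2$ or $C3$ (not to $A2$), and this gives $C4$. Note also a small unjustified step in your degree argument: $\delta(G-S_e)\geq 1$ alone does not prevent $G'=G-S_e-e$ from having an isolated vertex; you need $d_{G-S_e}(u),d_{G-S_e}(v)\geq 2$, which is exactly where the hypothesis $d_G(u),d_G(v)\geq n-4$ enters. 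Beyond that, this hypothesis plays no configuration-eliminating role in the lemma itself; its consequences ($|\overline{N[u]}|,|\overline{N[v]}|\leq 3$, the forced degrees in $C2$, $C3$, the independence of $\{w_1,w_2\}$ in $C4$) belong to Proposition \ref{prop}, as you rightly note at the end.
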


\begin{proof}
Since $G$ is minimally $(n-6)$-factor-critical graph, for every $e=uv\in E(G)$
with $d_{G}(u)\geq n-4$, $d_{G}(v)\geq n-4$, there exists $S_{e}\subseteq V(G)-\{u, v\}$
with $|S_{e}|=n-6$ such that every perfect matching of $G-S_{e}$ contains $e$
by Lemma \ref{minimal}. Then $G-e-S_{e}$ has no $1$-factor. Let $G'=G-e-S_{e}$.
By Theorem \ref{tutte}, there exists $X\subseteq V(G')$ such that $C_{o}(G'-X)> |X|$.
By parity, $C_{o}(G'-X)\geq |X|+2$.
So $|X|+2\leq C_{o}(G'-X)\leq |V(G'-X)|=6-|X|$. Then $|X|\leq 2$.
Since $G'+e$ has a $1$-factor, $C_{o}(G'-X)=|X|+2$ and $u$ and $v$ belong
respectively to two distinct odd components of $G'-X$.
Furthermore, $d_{G-e}(u)\geq n-5$ and $d_{G-e}(v)\geq n-5$. The vertices of $G-e$
distinct from $u$ and $v$ have the same degree as $G$.
So $\delta(G-e)=\delta(G)=n-5$ by Corollary \ref{Special}. Then $\delta(G-e-S_{e})=\delta(G')\geq 1$.
Thus $G'$ has no isolated vertex.

If $|X|=0$, then $G'$ has exactly two odd components, each of which has three vertices.
Since $G'+e$ has a $1$-factor, $e$ joins the two odd components. So $G'$ is $C1$.

If $|X|=1$, then $C_{o}(G'-X)=3$. Let $X=\{a\}$.
$G'-X$ has either three trivial odd components and an even component
with two vertices or two trivial odd components and an odd component with three vertices.
If $e$ joins two trivial odd components, then $G'$ is $C2$.
If $e$ joins a trivial odd component and the odd component with three vertices,
then $G'$ is $C3$.

If $|X|=2$, then $C_{o}(G'-X)=4$. So $G'-X$ consists of exactly four trivial odd components,
two of which are joined by $e$.
Let $X=\{a_{1}, a_{2}\}$. Then $a_{1}$, $a_{2}\notin \overline{N[u]}\cap \overline{N[v]}$,
otherwise, it is $C2$ or $C3$. So $G'$ is $C4$.
\end{proof}

\smallskip
Since $d_{G}(u)\geq n-4$ and $d_{G}(v)\geq n-4$, there are at most three vertices of
$G$ in each of $\overline{N[u]}$ and $\overline{N[v]}$.
By a close inspection of the four configurations of Lemma \ref{four},
we can easily obtain some properties, which plays important roles in the proofs of
Theorems~\ref{thm 5.3}, \ref{thm 5.4} and \ref{thm 5.5}.
\smallskip

\begin{prop}\label{prop}
Let $G, e, S_{e}$ be of Lemma \ref{four}.
If $G-e-S_{e}$ is one of Configurations $C1$, $C2$, $C3$ and $C4$, then
the following statements hold:

(1) If $G-e-S_{e}$ is $C1$, then
$|\overline{N[u]}\cap \overline{N[v]}|\leq 2$, $n-4\leq d_{G}(u), d_{G}(v)\leq n-3$ and
$|N(u)\cap N(v)|\leq n-6$.

(2) If $G-e-S_{e}$ is $C2$, then $|\overline{N[u]}\cap \overline{N[v]}|=3$ and
$d_{G}(u)=d_{G}(v)=n-4$.

(3) If $G-e-S_{e}$ is $C3$, then $1 \leq|\overline{N[u]}\cap \overline{N[v]}|\leq 2$.
Moreover, $d_{G}(u)=n-4$, $d_{G}(v)\geq n-4$ and $d_{G}(y_{1})=n-5$.

(4) If $G-e-S_{e}$ is $C4$, then $2 \leq|\overline{N[u]}\cap \overline{N[v]}|\leq 3$
and $n-4\leq d_{G}(u), d_{G}(v)\leq n-3$. Moreover, $\{w_{1}, w_{2}\}$ is an independent set of $G$.
\end{prop}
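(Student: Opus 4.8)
The plan is to read every assertion straight off the component structure of $G'=G-e-S_{e}$ produced in the proof of Lemma~\ref{four}, combining it with two cardinality facts. First, since $G$ is minimally $(n-6)$-factor-critical, Corollary~\ref{Special} gives $\delta(G)=n-5$, so $|\overline{N[x]}|\le 4$ for every $x\in V(G)$, while the hypothesis $d_{G}(u),d_{G}(v)\ge n-4$ gives $|\overline{N[u]}|,|\overline{N[v]}|\le 3$. Second, $|V(G')|=6$ and $|S_{e}|=n-6$, so $V(G)$ is the disjoint union of $S_{e}$ and the six vertices of $G'$; moreover $G'$ has no isolated vertex, since $G'+e=G-S_{e}$ has a perfect matching. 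The single structural observation I would isolate and reuse: if $x\in V(G')$ lies in the component $K$ of $G'-X$, then the only possible neighbours of $x$ in $G$ outside $S_{e}$ are the vertices of $(K\setminus\{x\})\cup X$, together with the other end of $e$ when $x\in\{u,v\}$ (recall $u,v$ lie in distinct odd components of $G'-X$); equivalently $\overline{N[x]}\supseteq V(G')\setminus\bigl(K\cup X\cup\{x\}\bigr)$, with $\{x\}$ replaced by $\{u,v\}$ when $x\in\{u,v\}$. In particular a trivial component of $G'-X$ not meeting $e$ has its unique $G'$-neighbour inside $X$, and a pair of vertices in distinct components of $G'-X$ that are not the two ends of $e$ is nonadjacent in $G$.

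Running this through the four configurations, write $K_{u},K_{v}$ for the components of $u,v$ in $G'-X$. The observation gives $\overline{N[u]}\supseteq V(G')\setminus(K_{u}\cup X\cup\{v\})$ and symmetrically for $v$; the two right-hand sides are disjoint subsets of $V(G')$, and counting their sizes in $C1,C2,C3,C4$ yields the stated lower bounds on $|\overline{N[u]}|,|\overline{N[v]}|$, hence the stated ranges for $d_{G}(u),d_{G}(v)$. When that lower bound already equals $3$, the bound $|\overline{N[u]}|\le 3$ pins $\overline{N[u]}$ (and $\overline{N[v]}$) down exactly: this gives $d_{G}(u)=d_{G}(v)=n-4$ in $C2$ and $d_{G}(u)=n-4$ in $C3$. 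For the intersection $\overline{N[u]}\cap\overline{N[v]}$: in $C2$ the third trivial odd component together with the two vertices of the even $2$-vertex component all lie in both non-neighbourhoods, and $\overline{N[u]}$ has only $3$ elements, so $|\overline{N[u]}\cap\overline{N[v]}|=3$; in $C3$ the trivial odd component $y_{1}$ not meeting $e$ lies in both (giving $\ge 1$), and in $C4$ the two trivial components $w_{1},w_{2}$ not meeting $e$ do (giving $\ge 2$); the upper bound $3$ in $C4$ is immediate from $|\overline{N[u]}|\le 3$. The two mildly delicate upper bounds are $C1$ and $C3$: in $C1$, $\overline{N[u]}$ contains the two vertices of $K_{v}\setminus\{v\}$ and $\overline{N[v]}$ the two of $K_{u}\setminus\{u\}$, and I would split $\overline{N[u]}\cap\overline{N[v]}$ into its part inside $K_{v}$ and its part outside, the former of size $\le 1$ because connectedness of $K_{v}$ forces $v$ adjacent to one of those two vertices, the latter of size $\le 1$ because $|\overline{N[u]}|\le 3$; this gives $\le 2$, and the same trick (connectedness of $K_{v}$) upgrades the $C3$ bound from $3$ to $2$.

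Two items remain. For $d_{G}(y_{1})=n-5$ in $C3$: $y_{1}$ is the trivial odd component of $G'-X$ not meeting $e$, so its unique $G'$-neighbour is the cut vertex $a$ and $\overline{N[y_{1}]}\supseteq V(G')\setminus\{y_{1},a\}$ already has four vertices; with $|\overline{N[y_{1}]}|\le 4$ from $\delta(G)=n-5$ this is an equality, so $d_{G}(y_{1})=n-5$. For $|N(u)\cap N(v)|\le n-6$ in $C1$: the two vertices $u_{1},u_{2}$ of $K_{u}\setminus\{u\}$ lie in $\overline{N[v]}$ and $v\in N(u)\setminus N(v)$, so $N(u)\cap N(v)\subseteq N(u)\setminus\bigl(\{v\}\cup(\{u_{1},u_{2}\}\cap N(u))\bigr)$; if $d_{G}(u)=n-4$, connectedness of $K_{u}$ gives $|\{u_{1},u_{2}\}\cap N(u)|\ge 1$, hence $|N(u)\cap N(v)|\le (n-4)-2=n-6$, while if $d_{G}(u)=n-3$ then $\overline{N[u]}$ is forced to equal the two vertices of $K_{v}\setminus\{v\}$, so $u$ is adjacent to both $u_{1},u_{2}$ and $|N(u)\cap N(v)|\le (n-3)-3=n-6$. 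Finally $\{w_{1},w_{2}\}$ is independent in $C4$ by the last sentence of the first paragraph. The main obstacle here is bookkeeping rather than depth: correctly matching the labelled vertices of Fig.~4 ($y_{1},w_{1},w_{2}$, and the optional dotted edges) to their structural roles, and checking for each relevant pair that adjacency in $G$ coincides with adjacency in $G'$; once the structural observation and the two cardinality bounds are in place, every individual deduction is a one-liner.
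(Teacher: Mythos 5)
Your proposal is correct and takes essentially the same route the paper intends: the paper gives no written proof of Proposition~\ref{prop}, asserting it ``by a close inspection of the four configurations'' using only $|\overline{N[u]}|,|\overline{N[v]}|\le 3$ and $\delta(G)=n-5$, and your argument is exactly that inspection carried out carefully, with the only mildly nontrivial points ($|N(u)\cap N(v)|\le n-6$ in $C1$, the upper bounds on $|\overline{N[u]}\cap\overline{N[v]}|$ via connectedness of the nontrivial components, and $d_{G}(y_{1})=n-5$ in $C3$) handled correctly.
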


\smallskip
If $n$ is odd, then both $n-2$ and $n-4$ are odd. Thus the total number of vertices of degree
$n-2$ or $n-4$ is even.
If $n$ is even, then both $n-3$ and $n-5$ are odd. So the total number of vertices of degree
$n-3$ or $n-5$ is even. Thus the total number of vertices of degree
$n-2$ or $n-4$ is also even.
Therefore, $G$ contains even total number of vertices of
degree $n-2$ or $n-4$.

\begin{thm}\label{thm 5.3}
Let $G$ be minimally $(n-6)$-factor-critical graph of order $n\geq 8$.
If $\Delta(G)=n-2$, then $G$ has at most two vertices with degree
$n-2$ and the two vertices are not adjacent. In particular, if $G$ has exactly two vertices with degree
$n-2$, then the other vertices of $G$ have degree $n-5$. If $G$ has one vertex with degree
$n-2$, then there is only one vertex with degree $n-4$ and the other
vertices of $G$ have degree $n-5$.
\end{thm}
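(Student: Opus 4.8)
The first assertion — at most two vertices of degree $n-2$, any two of them non-adjacent — is exactly Corollary~\ref{n2} applied with $k=n-6$ (the hypothesis $n\ge k+5$ there reads $n\ge n-1$). The real content is the degree distribution. The plan is to fix a vertex $u$ with $d_G(u)=n-2$, let $w$ be its unique non-neighbour (so $\overline{N[u]}=\{w\}$ and $N(u)=V(G)\setminus\{u,w\}$), recall $\delta(G)=n-5$ from Corollary~\ref{Special}, and then determine the degrees of the vertices of $N(u)$ and of $w$, using the four-configuration analysis of Lemma~\ref{four} and Proposition~\ref{prop} together with the parity observation preceding the theorem (the number of vertices of degree $n-2$ or $n-4$ is even).

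First I would show every $v\in N(u)$ has $d_G(v)\le n-4$: if $d_G(v)\ge n-4$ then $e=uv$ has both endpoints of degree $\ge n-4$, so Lemma~\ref{four} applies, Proposition~\ref{prop}(1),(2),(4) rule out $C1,C2,C4$ since each forces $d_G(u)\le n-3$, hence $G-e-S_e$ is $C3$, and Proposition~\ref{prop}(3) then makes the degree-$(n-4)$ endpoint of $e$ be $v$ rather than $u$, i.e. $d_G(v)=n-4$. Consequently $N(u)$ has no vertex of degree $n-3$ or $n-2$, so $w$ is the only vertex besides $u$ that can have degree $\ge n-3$.

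The crux is a finer look at the configuration $C3$ for an edge $uv$ with $v\in N(u)$ and $d_G(v)=n-4$. There, with $X=\{a\}$, the graph $G'=G-uv-S_e$ has odd components $\{v\}$, $\{z\}$ and a $3$-set $\{u,y_1,y_2\}$ containing $u$, with $d_G(y_1)=n-5$. A short computation with $\overline{N_G[u]}$, using that $u$ has a single non-neighbour and is non-adjacent in $G'$ to the component $\{z\}$, forces $u$ to be adjacent in $G'$ to $y_1,y_2,a$ and to all of $S_e$, so $\overline{N_G[u]}=\{z\}$ and therefore $z=w$. Since $z$ is isolated in $G'-a$, $d_G(w)=d_G(z)\le 1+|S_e|=n-5$; thus the mere existence of a degree-$(n-4)$ vertex in $N(u)$ forces $d_G(w)=n-5$. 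Two refinements I would extract along the way: for any such $v$, Proposition~\ref{prop}(3) gives $\overline{N[u]}\cap\overline{N[v]}=\{w\}$, so every degree-$(n-4)$ vertex of $N(u)$ is a non-neighbour of $w$; and when $d_G(w)=n-5$ exactly, the same count pins $\overline{N[w]}$ down to the $4$-set $\{u,v,y_1,y_2\}$ for any one such $v$.

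With these in hand the conclusion should fall out from parity. If $d_G(w)=n-2$, then $\overline{N[w]}=\{u\}$, $N(u)$ carries no degree-$(n-4)$ vertex by the crux step, hence is entirely of degree $n-5$, and $G$ has exactly the two vertices $u,w$ of degree $n-2$ with the rest of degree $n-5$. If $d_G(w)\in\{n-3,n-4\}$, the crux step again makes $N(u)$ all of degree $n-5$, the parity count is then $1$ or $2$, and parity excludes $1$, so $d_G(w)=n-4$ and $G$ has one vertex of degree $n-2$, one of degree $n-4$ (namely $w$), the rest of degree $n-5$. If $d_G(w)=n-5$, pick $v_1\in N(u)$ of degree $n-4$ (one exists, else the parity count is the odd number $1$); the crux step gives $\overline{N[w]}=\{u,v_1,y_1,y_2\}$ with $d_G(y_1)=n-5$, and since every degree-$(n-4)$ vertex of $N(u)$ lies in this $4$-set and differs from $u$ and $y_1$, there are at most the two candidates $v_1,y_2$, whence parity discards $2$ and leaves exactly one. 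The step I expect to be the real obstacle is the crux step — correctly reading the local structure of $C3$, in particular identifying $w$ with the second trivial odd component $z$ and nailing down the exact neighbourhood of $w$ and the degrees of $y_1,y_2$ from the interaction with $S_e$; once those local facts are secured, the remainder is just the parity bookkeeping above.
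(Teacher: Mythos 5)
Most of your outline runs on the same machinery as the paper (Corollary~\ref{n2}, Lemma~\ref{four}, Proposition~\ref{prop} and the parity remark), and a large part of it is correct: the step showing every neighbour of a degree-$(n-2)$ vertex $u$ has degree at most $n-4$, the identification of the second trivial odd component of $C3-\{a\}$ with the unique non-neighbour $w$ of $u$, the bound $d_G(w)\le 1+|S_e|=n-5$, and the parity bookkeeping in the cases $d_G(w)\in\{n-2,n-3,n-4\}$ all check out. The genuine gap is in the case $d_G(w)=n-5$, where you describe the configuration as having nontrivial component $\{u,y_1,y_2\}$ ``with $d_G(y_1)=n-5$''. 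That degree claim is a misreading of Proposition~\ref{prop}(3): the vertex called $y_1$ there is the \emph{second trivial} odd component of $C3-\{a\}$ --- in your relabelling that is $z=w$ --- and nothing in the configuration forces either companion of $u$ inside the $3$-vertex component to have degree $n-5$; they lie in $N(u)$, so all you know is that their degrees are $n-4$ or $n-5$. Hence the possibility that $v_1$, $y_1$ and $y_2$ all have degree $n-4$ is not excluded: the number of vertices of degree $n-2$ or $n-4$ would then be $4$, which is even, so parity is silent, and your reduction to ``at most the two candidates $v_1,y_2$'' has no justification. This is precisely where the paper works hardest: assuming three vertices of degree $n-4$ adjacent to $u$, it analyses the three edges $uv_1,uv_2,uv_3$ in turn, shows each forces $C3$ with $w$ as the second trivial component, deduces $\overline{N[w]}=\{u,v_1,v_2,v_3\}$ and that $v_1,v_2,v_3$ are pairwise non-adjacent, and concludes that $G-S_1$ has no perfect matching.

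The gap is repairable inside your own framework by the same two-edge trick. From the $C3$ configuration for $uv_1$, every perfect matching $M$ of $G-S_e$ contains $uv_1$ and must match $w$ to the cut vertex $a$ (its only available neighbour), so $y_1y_2\in M\subseteq E(G)$. If $d_G(y_1)=n-4$, run your crux analysis on the second edge $uy_1$: its configuration is again $C3$, with trivial components $\{y_1\}$ and $\{w\}$, and the two companions of $u$ in the nontrivial component are non-neighbours of $w$ distinct from $u$ and $y_1$, hence exactly $v_1$ and $y_2$; since $y_1$ is a trivial component there, $y_1y_2\notin E(G)$, a contradiction. Without some such argument, the third statement of the theorem (only one vertex of degree $n-4$) is not proved as written.
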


\begin{proof}
Firstly, by Corollary \ref{n2}, $G$ has at most two vertices with degree $n-2$ and
the two vertices are not adjacent. Let $d_{G}(u)=d_{G}(v)=n-2$ and $uv\notin E(G)$.
If $G$ has a vertex $x$ with degree $n-3$ or $n-4$,
then $ux, vx\in E(G)$. Consider edge $ux$. By Lemma \ref{four}, there exists
$S\subseteq V(G)-\{u, x\}$ with $|S|=n-6$ such that $G-ux-S$ is
one of Configurations $C1$, $C2$, $C3$ and $C4$.
Then, by Proposition \ref{prop}, $G-ux-S$ would be $C3$ only when $d_{G}(x)=n-4$.
Hence $x$ (resp. $u$) belongs to the trivial (resp. nontrivial) odd component of $C3-\{a\}$.
So $y_{1}=v$ and $d_{G}(v)=d_{G}(y_{1})=n-5$, a contradiction.
Thus all vertices of $G-\{u,v\}$ have degree $n-5$.

If $G$ has only one vertex $u$ with degree $n-2$, then $u$ is adjacent to every
vertex of $G$ except one. Suppose that $G$ has three vertices with degree $n-4$,
say $d_{G}(v_{1})=d_{G}(v_{2})=d_{G}(v_{3})=n-4$. Then we may assume that $uv_{1}, uv_{2}\in E(G)$.
Let $e_{1}=uv_{1}\in E(G)$ and $S_{1}\subseteq V(G)-\{u, v_{1}\}$
with $|S_{1}|=n-6$ satisfying Lemma \ref{four}. By Proposition \ref{prop}, $G-e_{1}-S_{1}$ is only $C3$.
Then $v_{1}$ (resp. $u$) belongs to the trivial (resp. nontrivial) odd component of $C3-\{a\}$.
Assume another trivial odd component of $C3-\{a\}$
is spanned by $\{w\}$. So $uw$, $wv_{1}\notin E(G)$. Hence $uv_{3}\in E(G)$.
Let $e_{2}=uv_{2}\in E(G)$ and $e_{3}=uv_{3}\in E(G)$. Similar discussion above,
we have $wv_{2}, wv_{3}\notin E(G)$. Then $\overline{N[w]}=$$\{u, v_{1}, v_{2}, v_{3}\}$.
Therefore, $G[\{u, v_{2}, v_{3}\}]$ is the nontrivial odd component of $G-e_{1}-S_{1}$.
So $v_{1}v_{2}, v_{1}v_{3}\notin E(G)$. Similarly, we have $v_{2}v_{3}\notin E(G)$.
But then $G-S_{1}$ has no $1$-factor, a contradiction.
Therefore, if $d_{G}(u)=n-2$, $G$ has only one vertex with degree $n-4$.

Suppose that $d_{G}(u)=n-2$ and $d_{G}(v_{1})=n-4$. If $G$ has a vertex $y$
with degree $n-3$, then $uy\notin E(G)$. Otherwise, by Proposition \ref{prop},
$G-uy-S'$ is not one of Configurations $C1$, $C2$, $C3$ and $C4$ for any
$S'\subseteq V(G)-\{u, y\}$ with $|S'|=n-6$, a contradiction. So $uv_{1}\in E(G)$.
However, from the above discussion, $G-uv_{1}-S_{1}$ is only $C3$.
Thus $y$ belongs to the trivial odd component of $C3-\{a\}$.
So $d_{G}(y)=n-5$, a contradiction.
Thus all vertices of $G-\{u, v_{1}\}$ have degree $n-5$.
\end{proof}

\begin{thm}\label{thm 5.4}
Let $G$ be minimally $(n-6)$-factor-critical graph of order $n\geq 9$.
If $\Delta(G)=n-3$, then $G$ has at most three vertices with degree
$n-3$. In particular, if $d_{G}(u)=d_{G}(v)=d_{G}(w)=n-3$, then
$uv, uw, vw\notin E(G)$ and the other vertices of $G$ have degree $n-5$.
\end{thm}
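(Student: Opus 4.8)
\textbf{Proof proposal for Theorem \ref{thm 5.4}.}

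The plan is to mirror the structure of the proof of Theorem \ref{thm 5.3}, but now working with vertices of degree $n-3$ instead of $n-2$. First I would show that $G$ has at most three vertices of degree $n-3$. Suppose $d_{G}(v_{1})=d_{G}(v_{2})=d_{G}(v_{3})=d_{G}(v_{4})=n-3$; since each $v_{i}$ misses only two vertices, among any four such vertices there must be an edge, so without loss of generality $v_{1}v_{2}\in E(G)$. Apply Lemma \ref{four} to $e=v_{1}v_{2}$ (legitimate since both endpoints have degree $\geq n-4$): there is $S\subseteq V(G)-\{v_{1},v_{2}\}$ with $|S|=n-6$ so that $G-e-S$ is one of $C1,C2,C3,C4$. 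By Proposition \ref{prop}(2),(3) the configurations $C2$ and $C3$ force one of the endpoints to have degree $n-4$ or $n-5$, contradiction; so $G-e-S$ is $C1$ or $C4$. In either case the other end-vertices of $e$ in $G-e-S$ are nonadjacent to $v_{1}$ or $v_{2}$, and I would track exactly which two vertices constitute $\overline{N[v_{1}]}$ and $\overline{N[v_{2}]}$, using that each has size exactly $2$ here. The aim is to derive a contradiction from the presence of a fourth vertex of degree $n-3$ by a counting argument on these small non-neighborhoods, exactly as in the last part of the proof of Theorem \ref{thm 5.3} where three vertices of degree $n-4$ were excluded.

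Next, assuming $d_{G}(u)=d_{G}(v)=d_{G}(w)=n-3$, I would show $uv,uw,vw\notin E(G)$. Suppose $uv\in E(G)$. Apply Lemma \ref{four} to $uv$; by Proposition \ref{prop} the only possibilities are $C1$ and $C4$, and in both $|\overline{N[u]}\cap\overline{N[v]}|\in\{2\}$ (the lower bounds in (1) and (4) combined with $|\overline{N[u]}|=|\overline{N[v]}|=2$ pin this down), so $\overline{N[u]}=\overline{N[v]}=\overline{N[u]}\cap\overline{N[v]}$, say equal to a $2$-set. In the $C4$ case $\{w_{1},w_{2}\}$ is independent and these are precisely the two common non-neighbors; in the $C1$ case the structure of the two $K_{3}$'s forces $u,v$ together with their four distinct private-type neighbors in the configuration, but since $\overline{N[u]}$ and $\overline{N[v]}$ each have size $2$ the configuration $C1$ is too large unless $n$ is small, which I would rule out using $n\geq 9$. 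Then $w$ lies in $\overline{N[u]}=\overline{N[v]}$ (since $w$ has degree $n-3$ and cannot be adjacent to both $u,v$ while... — actually I would instead argue: $w\notin N(u)$ or $w\notin N(v)$; by symmetry say $w\notin N(u)$, so $w\in\overline{N[u]}$, hence $w\in\overline{N[v]}$ too); now both common non-neighbors of $u,v$ are determined and $w$ is one of them, and I would push this through the configuration as in Theorem \ref{thm 5.3} to contradict $d_{G}(w)=n-3$.

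Finally, with $uv,uw,vw\notin E(G)$ established, I would show every vertex of $G-\{u,v,w\}$ has degree $n-5$. Let $x\notin\{u,v,w\}$. Since $u,v,w$ are pairwise nonadjacent and each misses only two vertices, each of $u,v,w$ is adjacent to $x$ unless $x$ is one of its (at most two) non-neighbors; a counting check shows $x$ is adjacent to at least one of $u,v,w$, say $ux\in E(G)$. If $d_{G}(x)\in\{n-3,n-4\}$, apply Lemma \ref{four} to $ux$. Since $d_{G}(u)=n-3$, Proposition \ref{prop}(2) rules out $C2$; if $d_{G}(x)=n-3$ then (3) rules out $C3$, leaving $C1$ or $C4$, which I would then contradict as above using that $x$ too has a size-$2$ non-neighborhood and the third degree-$(n-3)$ vertex among $\{u,v,w\}$ must be positioned inconsistently. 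If $d_{G}(x)=n-4$, then $G-ux-S$ is $C3$ (the others being excluded by Proposition \ref{prop} since $d_{G}(u)=n-3$), so $x$ is the trivial odd component and $u$ the nontrivial one, forcing $d_{G}(y_{1})=n-5$ where $y_{1}$ is a vertex of the $K_{3}$; tracking which vertices lie in the $K_{3}$ and which is the extra trivial component, and using that $v,w$ are nonadjacent to $u$, I would locate $v$ or $w$ inside this configuration and derive $d_{G}(v)\leq n-5$ or $d_{G}(w)\leq n-5$, contradiction.

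The main obstacle I anticipate is the first step, eliminating a fourth vertex of degree $n-3$: unlike the degree $n-2$ case where the single vertex $u$ dominates almost everything, here four vertices each with a two-element non-neighborhood interact in a less rigid way, and the configurations $C1$ and $C4$ both survive the crude degree bounds of Proposition \ref{prop}, so the contradiction must come from a careful combinatorial analysis of how the pairs $\overline{N[v_{i}]}$ overlap inside a fixed configuration — essentially a small but fiddly case check on which of $v_{3},v_{4}$ can sit in the $C1$ or $C4$ picture attached to the edge $v_{1}v_{2}$, iterated over choices of the auxiliary edge. I expect this to require re-applying Lemma \ref{four} to a second well-chosen edge (as was done repeatedly in Section 4), comparing the two forced configurations, and extracting an independent triple or an over-large common non-neighborhood that violates $\delta(G)=n-5$.
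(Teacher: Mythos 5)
Your overall framework (Lemma \ref{four} plus Proposition \ref{prop}, then a case analysis over the surviving configurations) is indeed the paper's, but the substantive steps are missing or wrong. First, your analysis of an adjacent pair $u,v$ of degree $n-3$ fails: since $uv\in E(G)$ and $|\overline{N[u]}|=|\overline{N[v]}|=2$, one has $|N(u)\cap N(v)|=n-6+|\overline{N[u]}\cap \overline{N[v]}|$, so Proposition \ref{prop}(1) forces the intersection to be $0$ in the $C1$ case, not $2$ as you claim; and $C1$ always has exactly six vertices, so there is nothing ``too large'' to exclude via $n\geq 9$ --- $C1$ is a genuine possibility and has to be killed by locating the third degree-$(n-3)$ vertex $w$ inside it, which is what the paper does. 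Second, and more seriously, your assertion that $w\notin N(u)$ or $w\notin N(v)$ has no justification: $u,v,w$ may form a triangle, and this is the hardest part of the paper's proof (its Case 3, in particular Subcase 3.3.3, where Lemma \ref{four} is applied to all three edges $uv,uw,vw$, the three sets $S_i'$ are shown to have no edges between them, and the contradiction comes from counting common neighbours, $|S_0|=n-9<n-8$, against $\delta(G)=n-5$). Nothing in your sketch covers this case, so the nonadjacency claim is unproved; note also that the ``main obstacle'' you defer (excluding a fourth vertex of degree $n-3$) is not a separate step at all, since four such vertices force an adjacent pair (each misses only two vertices) and hence contradict pairwise nonadjacency of a triple --- the difficulty you postpone is trivial, while the one you wave away (the triangle case) is the real one.

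Your final step also misidentifies the surviving configuration. If $x\notin\{u,v,w\}$ has degree $n-4$, then $\overline{N[u]}=\{v,w\}$ and both $v$ and $w$ (whose non-neighbourhoods are exactly $\{u,w\}$ and $\{u,v\}$) are adjacent to $x$, so $\overline{N[u]}\cap\overline{N[x]}=\emptyset$; by Proposition \ref{prop}(3) and (4) this excludes $C3$ and $C4$, and the only surviving configuration for the edge $ux$ is $C1$, not $C3$ as you assert. The paper's contradiction is then that the odd component of $C1$ containing $x$ must be $G[\{x,v,w\}]$ (its other two vertices are non-neighbours of $u$), and since $vw\notin E(G)$ the graph $G-S''$ has no $1$-factor, contradicting $(n-6)$-factor-criticality; your route via $C3$ and $d_G(y_1)=n-5$ does not exist. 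In summary the proposal has the right skeleton, but the three load-bearing pieces --- the triangle case, the correct elimination of $C1$ versus $C4$ for an adjacent degree-$(n-3)$ pair, and the correct configuration analysis in the degree-$(n-4)$ step --- are each absent or incorrect, so the argument does not go through.
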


\begin{proof}
If $G$ has four vertices with degree $n-3$, then two of them must be adjacent.
So it suffices to show that any three vertices with degree $n-3$ are not adjacent each other.
Assume that $d_{G}(u)=d_{G}(v)=d_{G}(w)=n-3$.
Suppose to the contrary that there is at least one pair of adjacent vertices among
$\{u, v, w\}$, say $uv\in E(G)$. Let $e=uv\in E(G)$. By Lemma \ref{four}, there exists
$S\subseteq V(G)-\{u, v\}$ with $|S|=n-6$ such that $G-e-S$ is
one of Configurations $C1$, $C2$, $C3$ and $C4$.

\smallskip
{\textbf{Case 1.} $uv\in E(G), uw, vw\notin E(G)$.}

Then $|\overline{N[u]}\cap \overline{N[v]}|\leq 2$ and $|N(u)\cap N(v)|\geq n-5$.
By Proposition \ref{prop}, $G-e-S$ would be $C4$.

If $G-e-S$ is $C4$, then $|\overline{N[u]}\cap \overline{N[v]}|=2$ and
$w\in \overline{N[u]}\cap \overline{N[v]}$.
Since $w$ is adjacent to every vertex in $V(G)-\{u, v, w\}$, there is not an
independent set with size two containing $w$ and a vertex in $V(G)-\{u, v, w\}$.
So $G-e-S$ is not $C4$. This contradicts Lemma \ref{four}.

\smallskip
{\textbf{Case 2.} $uv, vw\in E(G), uw\notin E(G)$.}

Then $|\overline{N[u]}\cap \overline{N[v]}|\leq 1$ and $|N(u)\cap N(v)|\geq n-6$.
By Proposition \ref{prop}, $G-e-S$ would be $C1$.

If $G-e-S$ is $C1$, then $|N(u)\cap N(v)|=n-6$ and $|\overline{N[u]}\cap \overline{N[v]}|=0$.
Hence $S=$ $N(u)\cap N(v)$.
Moreover, $w$ belongs to the odd component of $G-e-S$ containing $v$.
But $w$ is adjacent to at least one vertex in $N(u)\setminus N[v]$ as $d_{G}(w)=n-3$.
Thus $G-e-S$ is connected which can not be $C1$. This is a contradiction to Lemma \ref{four}.

\smallskip
{\textbf{Case 3.} $uv, uw, vw\in E(G)$.}

Then $|\overline{N[u]}\cap \overline{N[v]}|\leq 2$ and $|N(u)\cap N(v)|\leq n-4$.
We discuss the three subcases.

\smallskip
{\textbf{Subcase 3.1.} $|\overline{N[u]}\cap \overline{N[v]}|=2$.}

Let $\overline{N[u]}\cap \overline{N[v]}=\{x, y\}$. Obviously, $|N(u)\cap N(v)|=n-4$.
By Proposition \ref{prop}, $G-e-S$ would be $C4$.

If $G-e-S$ is $C4$, then $\{x, y\}$ is an
independent set of $C4$. Thus $xy\notin E(G)$.
If $wx\in E(G)$, $wy\notin E(G)$ or $wx\notin E(G)$, $wy\in E(G)$, we consider
edge $uw$ or $vw$ the same as Subcase 3.2.
If $wx$, $wy\in E(G)$, we consider
edge $uw$ or $vw$ the same as Subcase 3.3.
Assume that $wx$, $wy\notin E(G)$.
Since $\delta(G)=n-5$, $x$ and $y$ are adjacent to every vertex in
$V(G)-\{u, v, w, x, y\}$.
Let $S'\subseteq V(G)-\{u, v, w, x, y\}$ and $|S'|=n-6$. Then $G-S'$
has no $1$-factor, contradicting that $G$ is $(n-6)$-factor-critical.

\smallskip
{\textbf{Subcase 3.2.} $|\overline{N[u]}\cap \overline{N[v]}|=1$.}

Let $\overline{N[u]}\cap \overline{N[v]}=\{x\}$.
Clearly, $|N(u)\cap N(v)|=n-5$. By Proposition \ref{prop}, $G-e-S$ can not be
Configurations $C1$, $C2$, $C3$ or $C4$, which contradicts Lemma \ref{four}.

\smallskip
{\textbf{Subcase 3.3.} $|\overline{N[u]}\cap \overline{N[v]}|=0$.}

Clearly, $|N(u)\cap N(v)|=n-6$. Next, we consider the cardinalities of
$\overline{N[u]}\cap \overline{N[w]}$ and $\overline{N[v]}\cap \overline{N[w]}$.

{\bf Subcase 3.3.1.}
$|\overline{N[u]}\cap \overline{N[w]}|=1$ or $|\overline{N[v]}\cap \overline{N[w]}|=1$.

We consider edge $uw$ or $vw$ the same as Subcase 3.2.

{\bf Subcase 3.3.2.}
$|\overline{N[u]}\cap \overline{N[w]}|=2$ or $|\overline{N[v]}\cap \overline{N[w]}|=2$.

Without loss of generality, assume that $|\overline{N[u]}\cap \overline{N[w]}|=2$.
Then $|\overline{N[v]}\cap \overline{N[w]}|=0$ and $|N(u)\cap N(w)|=n-4$.
Let $\overline{N[u]}\cap \overline{N[w]}=\{x, y\}$.
By Proposition \ref{prop}, $G-e-S$ would be $C1$.
Then $S=N(u)\cap N(v)$ and $G[\{v, x, y\}]$ is an odd component of $C1$.
Since $G-S$ has a $1$-factor, $xy\in E(G)$. However, by Proposition \ref{prop},
$G-uw-S'$ would be $C4$ for some $S'\subseteq V(G)-\{u, w\}$ with $|S'|=n-6$.
Hence $\{x, y\}$ is an independent set of $G$. So $xy\notin E(G)$, a contradiction.

{\bf Subcase 3.3.3.}
$|\overline{N[u]}\cap \overline{N[w]}|=0$ and $|\overline{N[v]}\cap \overline{N[w]}|=0$.

By Proposition \ref{prop}, $G-e-S$ would be $C1$. Hence $S=N(u)\cap N(v)$.

Let $S_{1}'=N(u)\setminus N[v]$, $S_{2}'=N(v)\setminus N[u]$, $S_{3}'=N(u)\setminus N[w]$
and $S_{0}=N(u)\cap N(v) \cap N(w)$. Then $|S_{1}'|=|S_{2}'|=|S_{3}'|=2$ and
$|S_{0}|=n-|S_{1}'|-|S_{2}'|-|S_{3}'|-3=n-9$.
If $G-e-S$ is $C1$, then $S=N(u)\cap N(v)$=$S_{3}'\cup S_{0} \cup \{w\}$ and
there are not edges joining $S_{1}'$ and $S_{2}'$.
Let $e_{1}=uw$, $S_{1}\subseteq V(G)-\{u, w\}$ with $|S_{1}|=n-6$
and $e_{2}=vw$, $S_{2}\subseteq V(G)-\{v, w\}$ with $|S_{2}|=n-6$.
By Proposition \ref{prop}, both $G-e_{1}-S_{1}$ and $G-e_{2}-S_{2}$ are also $C1$.
Then $S_{1}=N(u)\cap N(w)$=$S_{1}'\cup S_{0} \cup \{v\}$
and there are not edges joining $S_{2}'$ and $S_{3}'$.
Moreover, $S_{2}=N(v)\cap N(w)$=$S_{2}'\cup S_{0} \cup \{u\}$
and there are not edges joining $S_{1}'$ and $S_{3}'$.
Therefore, there are not edges joining $S_{1}'$, $S_{2}'$ and $S_{3}'$ each other.
Since $\delta(G)=n-5$ and $G[S_{1}'\cup S_{2}'\cup S_{3}']$ has a $1$-factor,
every vertex in $S_{i}'$ is incident with $S_{0}$ at least $n-8$ edges for $i=1, 2, 3$.
But $|S_{0}|=n-9< n-8$, a contradiction.

\smallskip
Therefore, if $d_{G}(u)=d_{G}(v)=d_{G}(w)=n-3$, then $uv, uw, vw\notin E(G)$.

\smallskip
Now suppose that $G$ has two vertices $x, y$ with degree $n-4$. Then every vertex in
$\{u, v, w\}$ is adjacent to $x$ and $y$.
Let $e''= ux\in E(G)$ and $S''\subseteq V(G)-\{u, x\}$ with $|S''|=n-6$ satisfying Lemma \ref{four}.
Then $|\overline{N[u]}\cap \overline{N[x]}|=0$ and $|N(u)\cap N(x)|=n-7$.
By Proposition \ref{prop}, $G-e''-S''$ would be $C1$.
Thus $G[\{x, v, w\}]$ must be an odd component of $C1$.
Since $vw\notin E(G)$,  $G-S''$ has no $1$-factor, a contradiction.
So $G-e''-S''$ is not $C1$. This contradicts Lemma \ref{four}.
Therefore all vertices of $G-\{u, v, w\}$ have degree $n-5$.
\end{proof}

\begin{thm}\label{thm 5.5}
Let $G$ be minimally $(n-6)$-factor-critical graph of order $n\geq 11$.
If $\Delta(G)=n-4$, then $G$ has at most four vertices with degree
$n-4$ and the other vertices of $G$ have degree $n-5$.
\end{thm}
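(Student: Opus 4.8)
The plan is to follow the scheme of the proofs of Theorems~\ref{thm 5.3} and \ref{thm 5.4}, pushed one step further. By Corollary~\ref{Special} we have $\delta(G)=n-5$, and since $\Delta(G)=n-4$ every vertex of $G$ has degree $n-4$ or $n-5$; this already gives the second assertion, so it remains to bound the set $D$ of vertices of degree $n-4$. By the parity observation recorded just before Theorem~\ref{thm 5.3} (there are no vertices of degree $n-2$), $|D|$ is even, so it suffices to obtain a contradiction from $|D|\geq 6$; assume this. The basic budget constraint is that every $u\in D$ has $|\overline{N[u]}|=3$; since $|D\setminus\{u\}|\geq 5>3$, each vertex of $D$ has at least $|D|-4\geq 2$ neighbours in $D$, so in particular $G[D]$ has edges (and becomes rather dense as $|D|$ grows).

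Next I would set up the local tools. For any edge $e=uv$ with $u,v\in D$ we have $d_G(u)=d_G(v)=n-4$, so Lemma~\ref{four} puts $G-e-S_e$ into one of the configurations $C1,C2,C3,C4$, and Proposition~\ref{prop} restricts $t:=|\overline{N[u]}\cap\overline{N[v]}|$ and, for $C3$, produces a vertex $y_1$ of degree $n-5$. I would first record, by inspecting the configurations, which can occur for each value $t\in\{0,1,2,3\}$: $t=0$ forces $C1$; $t=3$ forces $C2$ (hence $\overline{N[u]}=\overline{N[v]}$) or $C4$; and $t\in\{1,2\}$ leaves only $C1,C3,C4$. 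I would also keep track, in each configuration, of the full adjacency pattern it imposes on the six vertices of $W:=V(G)\setminus S_e$, since these six-vertex obstructions get re-used. One recurring sub-argument is the following: if a set $D'\subseteq D$ all share the same non-neighbourhood triple $T$, then $D'\cap T=\emptyset$ and $D'$ is a clique, so when $|D'|\geq 3$ a three-element subset $D''\subseteq D'$ together with $T$ gives a six-set $W$ inducing at least two components, whence $C_o(G-B)\geq 2>0=|B|-(n-6)$ for $B=V(G)\setminus W$ with $|B|=n-6$, contradicting Lemma~\ref{FY}.

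The main work is then a case analysis in the spirit of the proof of Theorem~\ref{thm 5.4}, but longer: fix adjacent $u,v\in D$, choose a third vertex $w\in D$ (possible since $|D|\geq 6$, and usually chosen adjacent to $u$, since $u$ has at least two neighbours in $D$), and distinguish cases by the adjacencies among $\{u,v,w\}$ together with the configuration of $G-uv-S_{uv}$; in each case a contradiction is forced. These contradictions fall into the same three types as in Theorems~\ref{thm 5.3}--\ref{thm 5.4}: (i) the prescribed non-neighbourhood data of $u,v,w$ (each of size $3$) is incompatible with every one of $C1,\dots,C4$ for some well-chosen edge, contradicting Lemma~\ref{four}; (ii) one exhibits $B\subseteq V(G)$ with $|B|\geq n-6$ and $C_o(G-B)>|B|-(n-6)$, most often from the $C1$ ``two disjoint connected $3$-sets'' picture or from the clique/independent-triple sub-argument above, contradicting Lemma~\ref{FY}; or (iii) some auxiliary vertex---the $y_1$ of a $C3$, or a common non-neighbour of $u,v,w$---is forced to be non-adjacent to at least seven vertices, contradicting $\delta(G)=n-5$.

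I expect the main obstacle to be the size and bookkeeping of this case analysis, particularly the $C1$ and $C4$ cases, where $t$ is not pinned to a single value and one must chase, for a second edge $e'$ incident with $u$ (or with another vertex of the current configuration), which configuration $G-e'-S_{e'}$ is allowed to be---precisely the ``relabelling'' gymnastics carried out in the proof for $k=n-8$ and at several points in the proof of Theorem~\ref{thm 5.4}. The care needed lies in arranging that bringing in the third vertex $w$ always lands in case (i), (ii) or (iii), and in checking that the hypothesis $n\geq 11$ leaves enough room for all the auxiliary vertices and six-sets invoked.
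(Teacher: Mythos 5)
Your preliminary reductions are correct and match the paper: $\delta(G)=n-5$ by Corollary~\ref{Special}, so every vertex has degree $n-4$ or $n-5$ (giving the second assertion), the number of degree-$(n-4)$ vertices is even, and Lemma~\ref{four} together with Proposition~\ref{prop} is indeed the right toolkit. But there is a genuine gap: the entire content of the theorem is the derivation of a contradiction from $|D|\geq 6$, and your proposal defers exactly this to ``a case analysis in the spirit of the proof of Theorem~\ref{thm 5.4}\dots in each case a contradiction is forced'' without carrying out a single case. Asserting that the cases close is not a proof, especially since for degree-$(n-4)$ vertices each non-neighbourhood has three elements (not two, as in Theorem~\ref{thm 5.4}), so the constraints are strictly weaker and it is far from evident that an analysis based only on a triple $\{u,v,w\}\subseteq D$ and the adjacencies among them terminates in a contradiction.

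You are also missing the structural reduction that makes the paper's analysis tractable. The paper does not work with an arbitrary adjacent pair plus a third vertex of $D$: taking a six-element set $S_0$ of degree-$(n-4)$ vertices, it observes that each vertex of $S_0$ has at least two neighbours inside $S_0$ (so $G[S_0]$ contains a cycle) and that $G[S_0]$ has a perfect matching, because $|V(G)\setminus S_0|=n-6$ and $G$ is $(n-6)$-factor-critical; together these yield a path $P=uvxy$ of length three all four of whose vertices have degree $n-4$. The subsequent case analysis (on $|\overline{N[u]}\cap\overline{N[v]}|\in\{1,2\}$ and on which of $C1$--$C4$ occurs, then passing to the edge $vx$) leans repeatedly on the fourth degree-$(n-4)$ vertex $y$ of the path, e.g.\ forcing edges via ``$yu_{1}$ or $yu_{2}\in E(G)$ since $d_G(y)=n-4$''. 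With only a triple in hand this leverage is unavailable, so even as a plan your framework is weaker than what the proof requires; your auxiliary observation about vertices sharing a common non-neighbourhood triple covers only a very special situation and does not substitute for the path-of-length-three reduction. To complete the argument you would need to (re)discover that reduction (min degree $2$ in $G[S_0]$ plus the perfect matching of $G[S_0]$ excludes two disjoint triangles) and then actually execute the two-case analysis on $|\overline{N[u]}\cap\overline{N[v]}|$ with the second-edge arguments.
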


\begin{proof}
Since $G$ has an even number of vertices with degree $n-4$,
suppose that $G$ has six vertices with degree $n-4$, say $\{u, v, w, x, y, z\}$.
Let $S_{0}=\{u, v, w, x, y, z\}$. Every vertex in $S_{0}$
is adjacent to at most $n-6$ vertices in $V(G)-S_{0}$. Then $\delta(G[S_{0}])\geq 2$.
So $G[S_{0}]$ must contain a cycle. Moreover, $G[S_{0}]$ has a $1$-factor.
Thus we can always find a path $P$ with length three in $G[S_{0}]$.
To prove the theorem, we need only to show that $G$ does not contain such a path $P$
with length three, in which each vertex has degree $n-4$.

Suppose to the contrary that $G$ contains a path $P=uvxy$, in which each vertex has degree $n-4$.
Let $e=uv\in E(G)$. By Lemma \ref{four}, there exists
$S\subseteq V(G)-\{u, v\}$ with $|S|=n-6$ such that $G-e-S$ is
one of Configurations $C1$, $C2$, $C3$ and $C4$.
We discuss the two cases depending on $|\overline{N[u]}\cap \overline{N[v]}|$.

\smallskip
{\textbf{Case 1.} $|\overline{N[u]}\cap \overline{N[v]}|=1$.}

Clearly, $\overline{N[u]}\cap \overline{N[v]}=\{y\}$ and $|N(u)\cap N(v)|=n-7$.
Let $N(u)\setminus N[v]=\{u_{1}, u_{2}\}$ and $N(v)\setminus (N[u]\cup \{x\})=$ $\{v_{1}\}$.
By Proposition \ref{prop}, $G-e-S$ would be $C1$ or $C3$.

\smallskip
{\textbf{Subcase 1.1.} $G-e-S$ is $C1$.}

Then $N(u)\cap N(v)\subseteq S$
and $C1$ is one of Configurations $(a)$ and $(b)$ (see Fig. 5).

\begin{figure}[h]
\centering
\includegraphics[height=3cm,width=10.8cm]{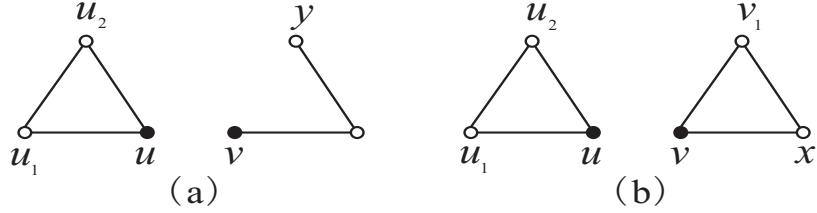}
\caption{\label{tu-5}The two configurations of $C1$.}
\end{figure}

$C1$ is $(a)$. Since $d_{G}(y)=n-4$, $yu_{1}$ or $yu_{2}\in E(G)$, a contradiction.

$C1$ is $(b)$. Then $S=N(u)\cap N(v)\cup \{y\}$. Hence $x$ is adjacent to every vertex in
$V(G)-$$\{u, u_{1}, u_{2}\}$. Now consider edge $e'=vx$.
Clearly, $\overline{N[v]}\cap \overline{N[x]}=$$\{u_{1}, u_{2}\}$
and $|N(v)\cap N(x)|=n-6$. By Proposition \ref{prop},
there exists $S'\subseteq V(G)-\{v, x\}$ with $|S'|=n-6$ such that
$G-e'-S'$ would be $C1$ or $C3$.

If $G-e'-S'$ is $C1$, then $S'=N(v)\cap N(x)$. Hence $u_{1}$ and $u_{2}$ belong respectively
to two distinct odd components of $G-e'-S'$. But $u_{1}u_{2}\in E(G)$, a contradiction.

If $G-e'-S'$ is $C3$, then $v$ (resp. $x$) belongs to the trivial (resp. nontrivial)
odd component of $C3-\{a\}$. Otherwise, $\{y_{1}, y_{2}, y_{3}\}=$$\{u, u_{1}, u_{2}\}$, a contradiction.
Since $\overline{N[v]}=$$\{y, u_{1}, u_{2}\}$ and $xy\in E(G)$, $y_{1}\in \{u_{1}, u_{2}\}$,
say $y_{1}=u_{1}$. So $\{y_{2}, y_{3}\}=$$\{u_{2}, y\}$.
Then $u_{1}u_{2}=y_{1}y_{2}\in E(G)$ or $u_{1}u_{2}=y_{1}y_{3}\in E(G)$, a contradiction.

\smallskip
{\textbf{Subcase 1.2.} $G-e-S$ is $C3$.}

Obviously, $y_{1}=y$. If $u$ belongs to the trivial odd component of $C3-\{a\}$,
then $G[\{v, x, v_{1}\}]$ is the nontrivial odd component of $C3-\{a\}$.
But $xy\in E(G)$, a contradiction. If $v$ belongs to the trivial odd component of $C3-\{a\}$,
then $G[\{u, u_{1}, u_{2}\}]$ is the nontrivial odd component of $C3-\{a\}$.
But $yu_{1}$ or $yu_{2}\in E(G)$ as $d_{G}(y)=n-4$, a contradiction.

\smallskip
Therefore, $G-e-S$ is not $C1$ or $C3$ when $|\overline{N[u]}\cap \overline{N[v]}|=1$.

\smallskip
{\textbf{Case 2.} $|\overline{N[u]}\cap \overline{N[v]}|=2$.}

Let $\overline{N[u]}\cap \overline{N[v]}=\{y, v_{1}\}$ and
$N(u)\setminus N[v]=u_{1}$. Then $\overline{N[u]}=\{x, y, v_{1}\}$ and
$\overline{N[v]}=$ $\{u_{1}, y, v_{1}\}$. Obviously, $|N(u)\cap N(v)|=n-6$.
By Proposition \ref{prop}, $G-e-S$ would be $C1, C3$ or $C4$.

\smallskip
{\textbf{Subcase 2.1.} $G-e-S$ is $C1$.}

Then $S=N(u)\cap N(v)$.
So $G[\{v, x, y\}]$ and $G[\{u, u_{1}, v_{1}\}]$ are two odd components of $C1$.
But $yu_{1}$ or $yv_{1}\in E(G)$ as $d_{G}(y)=n-4$, a contradiction.

\smallskip
{\textbf{Subcase 2.2.} $G-e-S$ is $C3$.}

Let $e'=vx\in E(G)$. By Lemma \ref{four}, there exists
$S'\subseteq V(G)-\{v, x\}$ with $|S'|=n-6$ such that $G-e'-S'$ is
one of Configurations $C1$, $C2$, $C3$ and $C4$.

\smallskip
{\textbf{Subcase 2.2.1.} $u$ (resp. $v$) belongs to the trivial (resp. nontrivial)
odd component of $C3-\{a\}$.}

Then $\{y_{1}, y_{2}, y_{3}\}=\{x, y, v_{1}\}$. Since $xy\in E(G)$, $y_{1}=v_{1}$.
Hence $xv_{1},$ $yv_{1}\notin E(G)$.
So $u_{1}v_{1}\in E(G)$ as $d_{G}(v_{1})\geq n-5$ and $yu_{1}\in E(G)$ as $d_{G}(y)=n-4$.

{\bf (2.2.1.1)} $|\overline{N[v]}\cap \overline{N[x]}|=2$.

Then $\overline{N[v]}\cap \overline{N[x]}=\{u_{1}, v_{1}\}$ and $|N(v)\cap N(x)|=n-6$.
By Proposition \ref{prop}, $G-e'-S'$ would be $C1, C3$ or $C4$.

If $G-e'-S'$ is $C1$, then $S'=$$N(v)\cap N(x)$. Hence $u_{1}$ and $v_{1}$ belong
respectively to two distinct odd components of $C1$. But $u_{1}v_{1}\in E(G)$.
Then $G-e'-S'$ is connected, which is a contradiction.
Moreover, $\overline{N[v]}=\{y, u_{1}, v_{1}\}$ and $\overline{N[x]}=\{u, u_{1}, v_{1}\}$.
But  $\{y_{1}, y_{2}, y_{3}\}\neq\{y, u_{1}, v_{1}\}$ and
$\{y_{1}, y_{2}, y_{3}\}\neq\{u, u_{1}, v_{1}\}$. So $G-e'-S'$ is not $C3$.
Since $\{u_{1}, v_{1}\}$ is not an independent set of $G$, $G-e'-S'$ is not $C4$.

{\bf (2.2.1.2)} $|\overline{N[v]}\cap \overline{N[x]}|=1$.

Then $\overline{N[v]}\cap \overline{N[x]}=\{v_{1}\}$
and $u_{1}x\in E(G)$. Let $\{w\}=(N(u)\cap N(v))$$\setminus N(x)$.
By Proposition \ref{prop}, $G-e'-S'$ would be $C1$ or $C3$.

If $G-e'-S'$ is $C1$, then $S'=N(v)\cap N(x)\cup \{u_{1}\}$.
$G[\{u, v, w\}]$ and $G[\{x, y, v_{1}\}]$ are two odd components of $C1$.
But $yw, v_{1}w\in E(G)$. So $G-e'-S'$ is connected, a contradiction.

If $G-e'-S'$ is $C3$, then $y_{1}=v_{1}$.
Since $\overline{N[v]}=\{y, u_{1}, v_{1}\}$ and $u_{1}v_{1}\in E(G)$, $v$ does not
belong to the trivial odd component of $C3-\{a\}$. Moreover,
$\overline{N[x]}=\{u, v_{1}, w\}$ and $v_{1}w\in E(G)$. Then $x$ does not
belong to the trivial odd component of $C3-\{a\}$. So $G-e'-S'$ is not $C3$.

\smallskip
{\textbf{Subcase 2.2.2.} $v$ (resp. $u$) belongs to the trivial (resp. nontrivial)
odd component of $C3-\{a\}$.}

Then $\{y_{1}, y_{2}, y_{3}\}=\{u_{1}, y, v_{1}\}$.
Because $uu_{1}\in E(G)$ and $yu_{1}$ or $yv_{1}\in E(G)$,
so $y_{1}=v_{1}$. Then $u_{1}v_{1},$ $yv_{1}\notin E(G)$.
So $yu_{1}\in E(G)$ as $d_{G}(y)=n-4$ and $xv_{1}\in E(G)$ as $d_{G}(v_{1})\geq n-5$.

{\bf (2.2.2.1)} $|\overline{N[v]}\cap \overline{N[x]}|=0$.

Then $u_{1}x\in E(G)$. Let $\{w_{1}, w_{2}\}=(N(u)\cap N(v))$$\setminus N(x)$.
By Proposition \ref{prop}, $G-e'-S'$ would be $C1$. However,
the odd component of $C1$ containing $v$ must contain $w_{1}$ or $w_{2}$
and the odd component of $C1$ containing $x$ must contain $y$ or $v_{1}$.
But $yw_{1}$, $yw_{2}$, $v_{1}w_{1}$, $v_{1}w_{2}\in E(G)$.
In each case, $G-e'-S'$ is connected which is not $C1$.

{\bf (2.2.2.2)} $|\overline{N[v]}\cap \overline{N[x]}|=1$.

Then $u_{1}x\notin E(G)$ and $\overline{N[v]}\cap \overline{N[x]}=\{u_{1}\}$.
Let $\{w_{1}\}=$$(N(u)\cap N(v))$$\setminus N(x)$.
By Proposition \ref{prop}, $G-e'-S'$ would be $C1$ or $C3$.

If $G-e'-S'$ is $C1$, then the odd component of $C1$ containing $x$ must contain $y$
and the odd component of $C1$ containing $v$ must contain $u_{1}$ or $w_{1}$.
But $yu_{1}, yw_{1}\in E(G)$, a contradiction.

If $G-e'-S'$ is $C3$, then $y_{1}=u_{1}$.
Since $\overline{N[v]}=\{y, u_{1}, v_{1}\}$ and $yu_{1}\in E(G)$, $v$ does not
belong to the trivial odd component of $C3-\{a\}$.
Because $\overline{N[x]}=\{u, u_{1}, w_{1}\}$ and $uu_{1}\in E(G)$, $x$ does not
belong to the trivial odd component of $C3-\{a\}$. Then $G-e'-S'$ is not $C3$.

Thus, if $G-e-S$ is $C3$, then there is an edge $e'=vx$ such that $G-e'-S'$ is not one of
Configurations $C1$, $C2$, $C3$ and $C4$, which contradicts Lemma \ref{four}.

\smallskip
{\textbf{Subcase 2.3.} $G-e-S$ is $C4$.}

Then $\{y, v_{1}\}$ is an independent set of $G$. So $yv_{1}\notin E(G)$.

If $xv_{1}\notin E(G)$, then $y$ and $v_{1}$ are adjacent to every vertex in
$V(G)-\{u, v, x, y, v_{1}\}$.
We consider edge $e'=vx$ the same as Subcase~2.2.1.

If $xv_{1}\in E(G)$, then $|\overline{N[v]}\cap \overline{N[x]}|\leq 1$
and $y$ is adjacent to every vertex in $V(G)-\{u, v, y, v_{1}\}$.
We consider edge $e'=vx$ with similar discussion in Subcase~2.2.2.

\smallskip
Therefore, $G-e-S$ is not $C1$, $C3$ or $C4$ when $|\overline{N[u]}\cap \overline{N[v]}|=2$.

\smallskip
From the above discussion, for any $e=uv\in E(G)$,
there exists no $S\subseteq V(G)-\{u, v\}$ with $|S|=n-6$ such that
$G-e-S$ is one of Configurations $C1$, $C2$, $C3$ and $C4$,
which contradicts Lemma~\ref{four}. Then $G$ does not
contain a path $P$ with length three, in which each vertex has degree $n-4$.
\end{proof}

\end{document}